\newcommand{\nc}{\newcommand}
\nc{\fg}{\mathfrak{f} } \nc{\vg}{\mathfrak{v} } \nc{\wg}{\mathfrak{w} }
\nc{\zg}{\mathfrak{z} } \nc{\ngo}{\mathfrak{n} } \nc{\kg}{\mathfrak{k} }
\nc{\mg}{\mathfrak{m} } \nc{\bg}{\mathfrak{b} } \nc{\ggo}{\mathfrak{g} } \nc{\eg}{\mathfrak{e} }
\nc{\ggob}{\overline{\mathfrak{g}} } \nc{\sog}{\mathfrak{so} }
\nc{\sug}{\mathfrak{su} } \nc{\spg}{\mathfrak{sp} } \nc{\slg}{\mathfrak{sl} }
\nc{\glg}{\mathfrak{gl} } \nc{\cg}{\mathfrak{c} } \nc{\rg}{\mathfrak{r} }
\nc{\hg}{\mathfrak{h} } \nc{\tg}{\mathfrak{t} } \nc{\ug}{\mathfrak{u} }
\nc{\dg}{\mathfrak{d} } \nc{\ag}{\mathfrak{a} } \nc{\pg}{\mathfrak{p} }
\nc{\sg}{\mathfrak{s} } \nc{\affg}{\mathfrak{aff} } \nc{\qg}{\mathfrak{q} } \nc{\lgo}{\mathfrak{l} }
\nc{\pca}{\mathcal{P}} \nc{\nca}{\mathcal{N}} \nc{\lca}{\mathcal{L}}
\nc{\oca}{\mathcal{O}} \nc{\mca}{\mathcal{M}} \nc{\tca}{\mathcal{T}}
\nc{\aca}{\mathcal{A}} \nc{\cca}{\mathcal{C}} \nc{\gca}{\mathcal{G}}
\nc{\sca}{\mathcal{S}} \nc{\hca}{\mathcal{H}} \nc{\bca}{\mathcal{B}}
\nc{\dca}{\mathcal{D}} \nc{\eca}{\mathcal{E}} \nc{\wca}{\mathcal{W}} \nc{\ica}{\mathcal{I}} \nc{\kca}{\mathcal{K}}
\nc{\vp}{\varphi} \nc{\ddt}{\tfrac{d}{dt}} \nc{\dsdt}{\tfrac{d^2}{dt^2}} \nc{\dds}{\frac{d}{ds}}
\nc{\dpar}{\frac{\partial}{\partial t}} \nc{\dpars}{\tfrac{\partial}{\partial t}} \nc{\im}{\mathrm{i}}
\nc{\SO}{\mathrm{SO}} \nc{\Spe}{\mathrm{Sp}} \nc{\Sl}{\mathrm{SL}}
\nc{\SU}{\mathrm{SU}} \nc{\Or}{\mathrm{O}} \nc{\U}{\mathrm{U}} \nc{\Gl}{\mathrm{GL}}
\nc{\Se}{\mathrm{S}} \nc{\Cl}{\mathrm{Cl}} \nc{\Spin}{\mathrm{Spin}}
\nc{\Pin}{\mathrm{Pin}} \nc{\G}{\mathrm{GL}_n(\RR)} \nc{\g}{\mathfrak{gl}_n(\RR)}
\nc{\Eg}{\mathrm{E}} \nc{\Fg}{\mathrm{F}} \nc{\Gg}{\mathrm{G}}
\nc{\RR}{{\Bbb R}} \nc{\HH}{{\Bbb H}} \nc{\CC}{{\Bbb C}} \nc{\ZZ}{{\Bbb Z}}
\nc{\FF}{{\Bbb F}} \nc{\NN}{{\Bbb N}} \nc{\QQ}{{\Bbb Q}} \nc{\PP}{{\Bbb P}} \nc{\OO}{{\Bbb O}}
\nc{\vs}{\vspace{.2cm}} \nc{\vsp}{\vspace{1cm}} \nc{\ip}{\langle\cdot,\cdot\rangle}
\nc{\ipp}{(\cdot,\cdot)} \nc{\la}{\langle} \nc{\ra}{\rangle} \nc{\unm}{\tfrac{1}{2}}
\nc{\unc}{\tfrac{1}{4}} \nc{\und}{\frac{1}{16}} \nc{\no}{\vs\noindent}
\nc{\lam}{\rho^2(\RR^n)^*\otimes\RR^n} \nc{\tangz}{{\rm T}^{\rm Zar}}
\nc{\nor}{{\sf n}}  \nc{\mum}{/\!\!/} \nc{\kir}{/\!\!/\!\!/}
\nc{\Ri}{\tfrac{4\Ric_{\mu}}{||\mu||^2}} \nc{\ds}{\displaystyle}
\nc{\ben}{\begin{enumerate}} \nc{\een}{\end{enumerate}} \nc{\f}{\frac}
\nc{\lb}{[\cdot,\cdot]} \nc{\isn}{\tfrac{1}{||v||^2}}
\nc{\gkp}{(\ggo=\kg\oplus\pg,\ip)} \nc{\ukh}{(\ug=\kg\oplus\hg,\ip)}
\nc{\tgkp}{(\tilde{\ggo}=\kg\oplus\pg,\ip)}
\nc{\wt}{\widetilde}
\nc{\iop}{\mathtt{i}} \nc{\jop}{\mathtt{j}} 
\nc{\Hk}{H_{\kil}} \nc{\gk}{g_{\kil}}
\nc{\Hess}{\operatorname{Hess}} \nc{\ad}{\operatorname{ad}}
\nc{\Ad}{\operatorname{Ad}} \nc{\rank}{\operatorname{rk}}
\nc{\Irr}{\operatorname{Irr}} \nc{\End}{\operatorname{End}}
\nc{\Aut}{\operatorname{Aut}} \nc{\Inn}{\operatorname{Inn}}
\nc{\Der}{\operatorname{Der}} \nc{\Ker}{\operatorname{Ker}}
\nc{\Iso}{\operatorname{Iso}} \nc{\Diff}{\operatorname{Diff}}
\nc{\Lie}{\operatorname{L}} \nc{\tr}{\operatorname{tr}} \nc{\dif}{\operatorname{d}}
\nc{\sen}{\operatorname{sen}} \nc{\modu}{\operatorname{mod}}
\nc{\CRic}{\operatorname{PP}} \nc{\Cric}{\operatorname{P}} \nc{\Ricci}{\operatorname{Ric}}
\nc{\sym}{\operatorname{sym}} \nc{\herm}{\operatorname{herm}} \nc{\symac}{\operatorname{sym^{ac}}}
\nc{\symc}{\operatorname{sym^{c}}} \nc{\scalar}{\operatorname{Sc}}
\nc{\grad}{\operatorname{grad}} \nc{\ricci}{\operatorname{Rc}} \nc{\kil}{\operatorname{B}} \nc{\cas}{\operatorname{C}} \nc{\lic}{\operatorname{L}}
\nc{\Nor}{\operatorname{Norm}}  \nc{\ricc}{\operatorname{Rc^{c}}}
\nc{\Ricc}{\operatorname{Ric^{c}}} \nc{\ricac}{\operatorname{Rc^{ac}}}
\nc{\Ricac}{\operatorname{Ric^{ac}}} \nc{\Riem}{\operatorname{Rm}} \nc{\Sec}{\operatorname{Sec}}
\nc{\riccig}{\operatorname{ric^{\gamma}}} \nc{\mm}{\operatorname{m}} \nc{\Mm}{\operatorname{M}}
\nc{\Le}{\operatorname{L}} \nc{\tang}{\operatorname{T}}
\nc{\level}{\operatorname{level}} \nc{\rad}{\operatorname{r}}
\nc{\abel}{\operatorname{ab}} \nc{\CH}{\operatorname{CH}} \nc{\Cone}{{\mathcal C}} \nc{\CCone}{\operatorname{CC}} \nc{\CP}{{\mathcal P}}
\nc{\mcc}{\operatorname{mcc}} \nc{\Adj}{\operatorname{Adj}}
\nc{\Order}{\operatorname{O}}  \nc{\inj}{\operatorname{inj}} \nc{\proy}{\operatorname{pr}}
\nc{\vol}{\operatorname{vol}} \nc{\Diag}{\operatorname{Dg}} \nc{\Diagg}{\operatorname{Diag}}
\nc{\Spec}{\operatorname{Spec}} \nc{\Ima}{\operatorname{Im}} \nc{\Rea}{\operatorname{Re}}
\nc{\spann}{\operatorname{span}} \nc{\Aff}{\operatorname{Aff}} \nc{\E}{\operatorname{E}} \nc{\id}{\operatorname{id}} \nc{\dete}{\operatorname{det}} \nc{\Crit}{\operatorname{Crit}} \nc{\val}{\operatorname{val}}
\theoremstyle{plain}
\newtheorem{theorem}{Theorem}[section]
\newtheorem{proposition}[theorem]{Proposition}
\newtheorem{corollary}[theorem]{Corollary}
\newtheorem{lemma}[theorem]{Lemma}
\theoremstyle{definition}
\theoremstyle{remark}
\newtheorem{remark}[theorem]{Remark}
\newtheorem{example}[theorem]{Example}
\title{Pluriclosed metrics on compact semisimple Lie groups}
\author{Jorge Lauret}  \author{Facundo Montedoro}
\address{FaMAF, Universidad Nacional de C\'ordoba and CIEM, CONICET (Argentina)}
\email{jorgelauret@unc.edu.ar} \email{facundo.montedoro@mi.unc.edu.ar} 
\thanks{This research was partially supported by three grants from, respectively,  CONICET, Univ. Nac. de C\'ordoba and Foncyt (Argentina)}
\date{\today}
\begin{document}

\maketitle

\begin{abstract}
Given a compact semisimple Lie group $G$ and a maximal torus $T\subset G$, we give an explicit description of all left and $\Ad(T)$-invariant pluriclosed Hermitian structures on $G$ in terms of the corresponding root system.  They depend on $2d+1$ parameters in the irreducible case, where $\dim{T}=2d$.  As applications, we obtain that the only left and $\Ad(T)$-invariant pluriclosed metrics which are also CYT are bi-invariant metrics (i.e., Bismut flat) and study the pluriclosed flow as a neat ODE system.    
\end{abstract}

\tableofcontents

\section{Introduction}\label{intro}

A Hermitian metric $g$ on a complex manifold $(M,J)$ is said to be {\it pluriclosed} (or SKT, short for strong K\"ahler with torsion) if the K\"ahler form $\omega=g(J\cdot,\cdot)$ satisfies that $dd^c\omega=0$ (we refer to the recent surveys \cite{Fin,FinGrn2} for further information).  Being $dd^c\omega$ a $4$-form, the condition turns out to be very strong in high dimensions, specially in the homogeneous case (see \cite{ArrLfn, ArrNcl, FinPrd, FrbSwn} for structure and classification results on solvmanifolds).  The primal compact examples are bi-invariant metrics on any even dimensional compact semisimple Lie group $G$ endowed with any left-invariant complex structure.  As known, these are precisely Bismut flat manifolds with finite fundamental group (see \cite{WngYngZhn}).  It was proved in \cite{FinGrnVzz} that a compact homogeneous complex manifold with finite fundamental group (i.e., a C-space) admits an invariant pluriclosed metric only if it is a Lie group (modulo a flag manifold, i.e., a K\"ahler homogeneous manifold). 

Any left-invariant complex structure $J$ on a compact semisimple Lie group $G$ is determined by the choices of a maximal torus $T\subset G$ and of a set of positive roots $\Delta^+$ (see \cite{Sml,Wng,Ptt}), they are often called {\it Samelson} complex structures.  More precisely, $J$ leaves invariant the $\kil_\ggo$-orthogonal decomposition $\ggo=\tg\oplus\qg$ of the Lie algebra $\ggo$ of $G$, where $\kil_\ggo$ is the Killing form of $\ggo$, $\tg$ is the Lie algebra of $T$, $J_\tg$ is any linear map such that $J_\tg^2=-I$ and on the complexification $\qg^c=\bigoplus\limits_{\alpha\in\Delta} \CC E_\alpha$, the complex linear map determined by $J_\qg$ on $\qg^c$ is given by $J_\qg E_\alpha=\pm\im E_\alpha$ for all $\alpha\in\pm\Delta^+$.  The space of all left-invariant complex structures on $G$ therefore depends on $2d^2$ parameters up to biholomorphism if $\dim{T}=2d$.  Any left and $\Ad(T)$-invariant metric which is compatible with a fixed $J=(J_\tg,J_\qg)$ is of the form $g=g_\tg+g_\qg$, where $g_\tg$ is any inner product on $\tg$ compatible with $J_\tg$ and $g_\qg=(x_\alpha)_{\alpha\in\Delta^+}$ is given by 
$$
g_\qg=\sum_{\alpha\in\Delta^+} x_\alpha (-\kil_\ggo)|_{\qg_\alpha}, \qquad x_\alpha>0, \quad
\qg_\alpha:=(\CC E_\alpha\oplus\CC E_{-\alpha})\cap\ggo,
$$
producing a space of Hermitian metrics that depends on $d^2+|\Delta^+|$ parameters.   

The following strong results on left-invariant pluriclosed metrics on $G$ were obtained by Fino and Grantcharov in \cite{FinGrn1}:
\begin{enumerate}[(a)]
%\item If $(G,J)$ admits a pluriclosed left-invariant metric, then it also admits a left and $\Ad(T)$-invariant pluriclosed metric $g$ for the maximal torus $T$ of $G$ attached to $J$.  
%
\item For any left and $\Ad(T)$-invariant pluriclosed metric $g$, there exists a bi-invariant metric $g_b$ on $G$ such that $g_\tg=g_b|_\tg$.  The proof is based on cohomological properties of the torus fibration $G\rightarrow G/T$ over the full flag manifold $G/T$.    

\item A $5$-parametric space of pluriclosed metrics on $G=\SO(9)$ is given for certain complex structure $J$.   

\item The only left and $\Ad(T)$-invariant pluriclosed metrics which are also CYT (Calabi-Yau with torsion, i.e., the Bismut Ricci form $\rho^B$ vanishes, see \S\ref{CYT-sec}) are bi-invariant metrics.  
\end{enumerate}
%As far as we know, beyond $\SU(2)\times\SU(2)$, $\SU(3)$ and $\SO(9)$, the only other known examples of pluriclosed left-invariant metrics (non bi-invariant) on compact semisimple Lie groups were exhibited in \cite{Phm} for $G=\Gg_2$.  
Explicit examples of pluriclosed left-invariant metrics (non bi-invariant) were also exhibited in \cite{Phm} for $G=\Gg_2$.  

\begin{remark}\label{refe}
It is worth recalling that given a pluriclosed metric $\omega$ on a compact manifold, $\omega+\overline{\partial}\psi+\partial\overline{\psi}$ is also a pluriclosed metric for any sufficiently small $(1,0)$-form $\psi$.  Moreover, using that the $(1,1)$-Aeppli cohomology is one-dimensional (see \cite[Theorem 3.1]{Brb}), one obtains that this construction gives all left-invariant pluriclosed metrics on a compact simple Lie group.  \end{remark} 

In this paper, after giving a formula for $dd^c\omega$ in terms of roots for any left and $\Ad(T)$-invariant Hermitian structure on $G$ (see Lemma \ref{ddco}), we provide an alternative algebraic proof for the result stated in (a) above (see Proposition \ref{SKT3}) and then we prove the following complete classification (see Theorem \ref{SKT2} for a more detailed statement).  We fix a left-invariant complex structure $J$ attached to a maximal torus $T$ and consider the decomposition $\tg=\tg_1\oplus\dots\oplus\tg_s$, where the $\tg_i$ are the corresponding maximal tori of the simple factors of $G$.  

\begin{theorem}\label{SKT-intro}
On a compact semisimple Lie group $G$ endowed with a Samelson complex structure $J$, any left and $\Ad(T)$-invariant pluriclosed metric is given as follows: $g_\tg(\tg_i,\tg_j)=0$ for all $i\ne j$ and on each simple factor, up to scaling, $g_\tg=-\kil_\ggo|_\tg$ and $g_\qg=(x_\alpha)_{\alpha\in\Delta^+}$, where
$$
x_\alpha:=\sum_{i=1}^n k^\alpha_ix_i -\left(\sum_{i=1}^n k^\alpha_i -1\right), \qquad \forall \alpha=\sum_{i=1}^n k^\alpha_i\alpha_i\in\Delta^+, \quad k^\alpha_i\in\ZZ_{\geq 0},  
$$
$\Pi=\{\alpha_1,\dots,\alpha_n\}\subset\Delta^+$ is the set of simple roots and $x_i:=x_{\alpha_i}$ for $i=1,\dots,n$ (equivalently, $x_{\alpha+\beta}=x_\alpha+x_\beta-1$ for all $\alpha,\beta,\alpha+\beta\in\Delta^+$).       
\end{theorem}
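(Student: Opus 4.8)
The plan is to reduce the torus part of $g$ to a Killing-form multiple using the rigidity already available, to read off the constraints on $g_\qg$ from the root-theoretic expression for $dd^c\omega$, and finally to solve the resulting linear recursion on the root lattice.

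First I would dispose of the torus part. By Proposition \ref{SKT3} there is a bi-invariant metric $g_b$ on $G$ with $g_\tg=g_b|_\tg$. Since $\ggo=\ggo_1\oplus\dots\oplus\ggo_s$ is a sum of simple ideals, every bi-invariant inner product has the form $g_b=\bigoplus_i c_i(-\kil_\ggo)|_{\ggo_i}$ with $c_i>0$, and in particular it is block diagonal with respect to $\tg=\tg_1\oplus\dots\oplus\tg_s$. This yields simultaneously $g_\tg(\tg_i,\tg_j)=0$ for $i\ne j$ and $g_\tg|_{\tg_i}=c_i(-\kil_\ggo)|_{\tg_i}$. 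Because roots of distinct simple factors are $\kil_\ggo$-orthogonal and never sum to a root, the bracket relations — hence every term in Lemma \ref{ddco} — decouple along the factors; after rescaling each factor by $c_i^{-1}$ I may therefore assume $G$ is simple and $g_\tg=-\kil_\ggo|_\tg$.

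Next I would extract the relation on the $x_\alpha$. Working in a Chevalley-type basis with $[E_\alpha,E_\beta]=N_{\alpha,\beta}E_{\alpha+\beta}$ and $[E_\alpha,E_{-\alpha}]=H_\alpha\in\tg^c$ (normalized by $\kil_\ggo(H_\alpha,\cdot)=\alpha(\cdot)$), the $\Ad(T)$-invariance of $dd^c\omega$ forces $dd^c\omega(E_{\gamma_1},\dots,E_{\gamma_4})=0$ unless $\gamma_1+\dots+\gamma_4=0$. Evaluating the formula of Lemma \ref{ddco} on the quadruple $(E_\alpha,E_{-\alpha},E_\beta,E_{-\beta})$ splits the contributions into two kinds: those proportional to $x_{\alpha+\beta}|N_{\alpha,\beta}|^2$ and $x_{\alpha-\beta}|N_{\alpha,-\beta}|^2$, coming from the brackets that return to $\qg$, and those coming from the torus parts $H_\alpha,H_\beta$, which are measured by $g_\tg=-\kil_\ggo$ and contribute the scalar $(\alpha,\beta)$. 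Feeding in the standard Chevalley relations for $|N_{\alpha,\beta}|^2$ in terms of the root string through $\beta$, the common root-length factors cancel and the vanishing condition collapses to the single linear identity $x_{\alpha+\beta}=x_\alpha+x_\beta-1$ whenever $\alpha+\beta\in\Delta^+$, the configurations with $\alpha\pm\beta\notin\Delta$ being satisfied identically. It is precisely the normalization $g_\tg=-\kil_\ggo$ that makes the constant $-1$ appear rather than a root-dependent quantity.

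Finally I would solve the recursion. Putting $y_\alpha:=x_\alpha-1$ turns the identity into additivity, $y_{\alpha+\beta}=y_\alpha+y_\beta$ for all $\alpha,\beta,\alpha+\beta\in\Delta^+$. In an irreducible root system every positive root $\alpha=\sum_i k_i^\alpha\alpha_i$ is obtained from the simple roots by adding one simple root at a time with each partial sum a root, so induction on the height gives $y_\alpha=\sum_i k_i^\alpha y_{\alpha_i}$, that is $x_\alpha=1+\sum_i k_i^\alpha(x_i-1)=\sum_i k_i^\alpha x_i-(\sum_i k_i^\alpha-1)$; conversely this expression visibly satisfies the additivity relation, with positivity $x_\alpha>0$ carving out the admissible open region. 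Reassembling the factors and reinstating the scalings $c_i$ finishes the argument. I expect the main obstacle to be the bookkeeping in the middle step: correctly combining the structure constants $N_{\gamma,\delta}$ with the torus terms, and checking that the many quadruples with four distinct roots $\alpha+\beta+\gamma+\delta=0$ impose nothing beyond the pairwise relation — which I anticipate will follow from the Jacobi identity among the $N_{\gamma,\delta}$ once $x_{\alpha+\beta}=x_\alpha+x_\beta-1$ is in hand.
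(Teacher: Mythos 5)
Your outer skeleton coincides with the paper's: Proposition \ref{SKT3} to pin down $g_\tg$, the component equations of $dd^c\omega$ from Lemma \ref{ddco}/Corollary \ref{SKT}, a height induction to solve the recursion, and the quadratic identity on the $N_{\alpha,\beta}$ for the converse. The gap is in the middle step, which is where the entire difficulty of the theorem sits. It is \emph{not} true that the equation \eqref{skt1} coming from the quadruple $(E_\alpha,E_{-\alpha},E_\beta,E_{-\beta})$ ``collapses to $x_{\alpha+\beta}=x_\alpha+x_\beta-1$ whenever $\alpha+\beta\in\Delta^+$''. That collapse occurs only when in addition $\alpha-\beta\notin\Delta$: then $N_{\alpha,-\beta}=0$, $N_{\alpha,\beta}^2=-\la\alpha,\beta\ra\ne 0$, and \eqref{skt1} becomes $\la\alpha,\beta\ra=-\la\alpha,\beta\ra(x_{\alpha+\beta}-x_\alpha-x_\beta)$. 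When $\alpha-\beta\in\Delta$, the single equation \eqref{skt1} involves both $x_{\alpha+\beta}$ and $x_{\alpha-\beta}$ and is a genuinely different linear relation. Concretely, in $B_2$ with $\alpha$ short and $\beta$ long simple, the pair $(\alpha+\beta,\alpha)$ yields $0=x_{2\alpha+\beta}-2x_{\alpha+\beta}+x_\beta$, which is not the additive relation $x_{2\alpha+\beta}=x_{\alpha+\beta}+x_\alpha-1$ and only implies it after being combined with the equation $x_{\alpha+\beta}=x_\alpha+x_\beta-1$ from the pair $(\alpha,\beta)$; in $G_2$ the equations are still more entangled, e.g.\ $-1=4x_{2\alpha+\beta}-7x_{\alpha+\beta}+3x_\beta-x_\alpha$. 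Worse, the escape of re-choosing the decomposition is unavailable in low rank: $2\alpha+\beta$ in $B_2$ has the \emph{unique} decomposition $(\alpha+\beta)+\alpha$ into positive roots, and there the difference $\beta$ is a root. Your dichotomy also omits pairs with $\alpha+\beta\notin\Delta$ but $\alpha-\beta\in\Delta$ (such as $(2\alpha+\beta,\alpha+\beta)$ in $B_2$), which produce further nontrivial constraints rather than holding identically.

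What your proposal is missing is precisely the content of the paper's proof of Theorem \ref{SKT2}: (1) a case-by-case analysis of the rank-two subsystems spanned by two simple roots (types $A_2$, $B_2=C_2$, $G_2$), solving in each the small coupled linear system of \eqref{skt1}-equations to obtain the additive relations there; and (2) the combinatorial fact that any $\gamma\in\Delta^+$ whose support contains at least three simple roots can be written as $\gamma=\alpha+\beta$ with $\alpha,\beta\in\Delta^+$ and $\alpha-\beta\notin\Delta$, which is what makes the clean collapse available in the inductive step for all remaining roots. Without (1) and (2), the necessity direction is unproved, and the root-string cancellation you invoke does not substitute for them. Finally, note that the obstacle you single out at the end --- the quadruples of four distinct roots with $\alpha+\beta+\gamma+\delta=0$ --- is actually the easy part: those enter only in the sufficiency direction and are disposed of by $N_{\alpha,\beta}N_{\gamma,\delta}-N_{\alpha,\gamma}N_{\beta,\delta}+N_{\alpha,\delta}N_{\beta,\gamma}=0$ exactly as you anticipate (and as the paper does); the pairwise equations with $\alpha-\beta\in\Delta$ are where the real work lies.
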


If $\rank(G)=2d$, then each of these metrics is compatible with a $d(d-1)$-parametric space of left-invariant complex structures on $G$.  According to the theorem, for a fixed $J$, if the space of all left and $\Ad(T)$-invariant pluriclosed metrics compatible with $J$ is non-empty, then it depends on $r(2d+1)$ parameters and contains an $r$-parametric space of bi-invariant metrics, where $r$ is the number of irreducible factors of $(G,J)$ as a complex manifold.  In particular, any irreducible $J$ admits at most one compatible bi-invariant metric up to scaling.  Note that $J$ can be irreducible on a non-simple $G$ (see Example \ref{G1G2}).  

Intriguingly enough, the last line in the theorem should be compared with the K\"ahler condition for the Hermitian structure $(J_\qg,g_\qg)$ on the flag manifold $G/T$, given by $x_{\alpha+\beta}=x_\alpha+x_\beta$ for all $\alpha,\beta,\alpha+\beta\in\Delta^+$, whose solutions are precisely 
$$
x_\alpha:=\sum\limits_{i=1}^{n} k^\alpha_ix_i, \qquad\forall\alpha=\sum\limits_{i=1}^{n}k^\alpha_i\alpha_i\in\Delta^+.
$$  
We do not know whether there may be some hidden geometric reason for this behavior.    

As a first application of the explicit description of pluriclosed metrics given in Theorem \ref{SKT-intro}, we provide in \S\ref{CYT-sec} an alternative proof for the result stated in (c) above using the convexity properties of the functional 
\begin{equation}\label{F-intro}
F(g)=F(x_1,\dots,x_n):=\sum_{\alpha\in\Delta^+} x_\alpha-\log{x_\alpha}. 
\end{equation}
Secondly, we apply Theorem \ref{SKT-intro} to study the pluriclosed flow $\dpars\omega=-(\rho^B)^{1,1}$ for a one-parameter family $\omega(t)=g(t)(J\cdot,\cdot)$ as an ODE system for 
$$
X(t):=(x_1(t),\dots,x_{n}(t))\in\RR^n. 
$$  
The pluriclosed flow was introduced in \cite{StrTian} and in the homogeneous case, it has been studied mostly on solvmanifolds (see \cite{ArrLfn,FusVzz} and the references therein).  On a compact semisimple Lie group $G$, using a general convergence result from \cite{GrcJrdStr}, Barbaro proved in \cite{Brb} the convergence to a Bismut flat metric starting at any pluriclosed metric (not necessarily left-invariant) by computing the $(1,1)$-Aeppli cohomology of $G$, obtaining as an application the non-existence of pluriclosed and CYT metrics on $G$ other than the bi-invariant ones.  

We show in \S\ref{PF-sec} that the metrics $g(t)$ attached to $X(t)$ pluriclosed flow according to the following ODE system: 
\begin{equation}\label{PF2} 
x_j'
%=-\sum_{\substack{\alpha\in\Delta^+\\ 1\leq i\leq n}} 
=\sum_{\alpha\in\Delta^+,1\leq i\leq n} \left(1-\tfrac{1}{x_\alpha}\right) k^\alpha_i \la\alpha_i,\alpha_j\ra, \qquad\forall j=1,\dots,n,     
\end{equation}
or equivalently,
\begin{equation}\label{PF-intro}
X'(t)=-Q\grad(F)_{X(t)}, \qquad \mbox{where}\quad Q:=[\la\alpha_i,\alpha_j\ra],     
\end{equation}
and $F$ is the functional defined in \eqref{F-intro}.  Thus $Y(t):=Q^{-\unm}X(t)$ is the negative gradient flow of the real analytic function $H(Y):=F(Q^{\unm}Y)$, which implies that any solution $X(t)$ converges to $(1,\dots,1)$ as $t\to\infty$.  Any left and $\Ad(T)$-invariant pluriclosed solution $g(t)$ therefore converges, as $t\to\infty$, to a bi-invariant metric (see Theorem \ref{PF-thm} for a more precise statement).  We note that the ODE systems \eqref{PF2} and \eqref{PF-intro} clearly describe the pluriclosed flow evolution among a large class of metrics.  This may for instance be used to establish the evolution behavior of many geometric quantities along these pluriclosed flow solutions.

\vspace{.3cm}\noindent 
{\it Acknowledgements}.  We thank Beatrice Brienza and Ramiro Lafuente for fruitful conversations and to Fabio Podest\`a for bringing the paper \cite{AlkDvd} to our attention.  We are specially grateful to Joseph Kwong for pointing out a mistake in the proof of Theorem 3.7 in a previous version of this paper, and for helping us to fix it.  We also thank the anonymous referee for Remark \ref{refe}.

\section{Hermitian geometry of Lie groups}\label{preli} 

Let $G$ be a connected and compact semisimple Lie group of even dimension.  The rank of $G$ is therefore also even, say $\rank(G)=2d$.  We refer to \S\ref{roots} for the definitions and notation concerning the root system of $G$.  

It was proved by Pittie \cite{Ptt} that any left-invariant complex structure on $G$ is one of the found independently by Samelson \cite{Sml} and Wang \cite{Wng}, which are defined as follows.  Given a maximal torus $T\subset G$ and an ordering $\Delta^+$ of roots, we consider $J:\ggo\rightarrow\ggo$ given by: the $\kil_\ggo$-orthogonal decomposition $\ggo=\tg\oplus\qg$ is $J$-invariant, $J_\tg:=J|_\tg$ is any linear map such that $J_\tg^2=-I$ and if
$$
\ggo^c=\tg^c\oplus\qg^c, \qquad \qg^c=\bigoplus_{\alpha\in\Delta} \CC E_\alpha, 
$$
then the complex linear map determined by $J_\qg:=J|_\qg$ on $\qg^c$ is given by
$$
J_\qg E_\alpha=\im E_\alpha, \quad\forall\alpha\in\Delta^+, \qquad 
J_\qg E_\alpha=-\im E_\alpha, \quad\forall\alpha\in\Delta^-:=-\Delta^+.  
$$
The biholomorphism class of the complex manifold $(G,J)$, $J=(J_\tg,J_\qg)$ does not depend on the choice of $T$ nor of $\Delta^+$, so we can fix $J_\qg$.  Two different $J_\tg$'s produce biholomorphic complex structures if and only if they belong to the same orbit of certain finite subgroup of $\Aut(G)$ (see \cite{Ptt}).  This implies that the space $\cca^G$ of all left-invariant complex structures on $G$ depends on $2d^2$ parameters up to biholomorphism (recall that $\dim{T}=2d$).  

The left and $\Ad(T)$-invariant metrics which are compatible with $J=(J_\tg,J_\qg)$ are all of the form $g=g_\tg+g_\qg$, where $g_\tg$ is any inner product on $\tg$ compatible with $J_\tg$ and $g_\qg=(x_\alpha)_{\alpha\in\Delta}$ is given by 
$$
g_\qg=\sum_{\alpha\in\Delta^+} x_\alpha (-\kil_\ggo)|_{\qg_\alpha}, \qquad 
\qg_\alpha:=(\CC E_\alpha\oplus\CC E_{-\alpha})\cap\ggo. 
$$
Equivalently,    
$$
\left\{\tfrac{1}{\sqrt{x_\alpha}}e_\alpha, \; \tfrac{1}{\sqrt{x_\alpha}}J_\qg e_\alpha : \alpha\in\Delta^+\right\} 
$$
is a $g$-orthonormal basis of $\qg$.  In particular, $x_\alpha>0$ and we set $x_{-\alpha}:=x_\alpha$ for any $\alpha\in\Delta^+$. The corresponding symmetric $\CC$-bilinear form on $\ggo^c$, also denoted by $g$, satisfies that 
$$
g(E_\alpha,E_{-\alpha})=-x_\alpha, \qquad\forall \alpha\in\Delta, 
$$ 
and $g(E_\alpha,E_\beta)=0$ whenever $\alpha+\beta\ne 0$.  The {\it Killing} metric $\gk:=-\kil_\ggo$ has $x_\alpha=1$ for any $\alpha\in\Delta^+$ and a bi-invariant metric $g_b=z_1(-\kil_{\ggo_1})+\dots+z_s(-\kil_{\ggo_s})$ corresponds to $x_\alpha=z_i$ if and only if $\alpha\in\Delta^+_i$, where $G=G_1\times\dots\times G_s$ is the decomposition in simple factors and $\Delta_i$ is the root system of $G_i$.  

We note that $(G,J)$ is {\it irreducible} as a complex manifold if and only if $J_\tg$ does not leave any subspace $\tg_{i_1}\oplus\dots\oplus\tg_{i_k}$ with $0<k<s$ invariant, where $\tg_i$ is the maximal torus of $\ggo_i$ (cf.\ \cite[Section 2]{Brb}).  In that case, there exists a unique bi-invariant metric compatible with $J$ (up to scaling), which is not necessarily the Killing metric as the following example shows.  

\begin{example}\label{G1G2}
Consider $G=G_1\times G_2$ such that $G_1,G_2$ are simple, $\rank(G_1)=\rank(G_2)$ and relative to the decomposition $\tg=\tg_1\oplus\tg_2$,   
$$
J_\tg=\left[\begin{matrix} 0&-\tfrac{1}{b}I\\ bI&0 \end{matrix}\right], \qquad b\ne 0.
$$
Then $J$ is irreducible and a bi-invariant metric $g_b=z_1(-\kil_{\ggo_1})+z_2(-\kil_{\ggo_s})$ is compatible with $J$ if and only if $\tfrac{z_1}{z_2}=b$.
\end{example}   

We consider the spaces $\mca^T$, $\cca^T$, $\hca^T_J$, $\cca^T_g$ and $\hca^T$ of all left and $\Ad(T)$-invariant metrics, complex structures, metrics compatible with a fixed $J\in\cca^T$, complex structures compatible with a fixed $g\in\mca^T$ and Hermitian structures, respectively.  They are all manifolds and it is easy to check that their dimensions are respectively given by  
$$
\begin{array}{c}
\dim{\mca^T}=\tfrac{d(d+1)}{2}+|\Delta^+|, \qquad 
\dim{\cca^T}=2d^2, \\ \\
\dim{\hca^T_J}=d^2+|\Delta^+|, \qquad 
\dim{\cca^T_g}=d(d-1), \qquad 
\dim{\hca^T}=3d^2+|\Delta^+|. 
\end{array}
$$
The K\"ahler form $\omega=g(J\cdot,\cdot)$ of a Hermitian structure $(J,g)$ on $G$ as above is given by 
$$
\omega=\omega_\tg+\omega_\qg, \qquad \mbox{where} \quad \omega_\tg=g_\tg(J_\tg\cdot,\cdot), \quad \omega_\qg(E_\alpha,E_{-\alpha})=-\im \epsilon_\alpha x_\alpha, \quad\forall \alpha\in\Delta,  
$$
where $\epsilon_\alpha:=\pm 1$ for $\alpha\in\Delta^\pm$.  Recall that if we consider $H_\alpha:=[E_\alpha,E_{-\alpha}]\in\tg^c$, then the set $\{ H_\alpha:\alpha\in\Delta^+\}$ generates $\tg^c$.     

\begin{lemma}\label{dco}
For any left and $\Ad(T)$-invariant Hermitian structure $(J,g)$ on a compact semisimple Lie group $G$, the only possibly nonzero components of the $3$-forms $d\omega$ and $d^c\omega:=-d\omega(J\cdot,J\cdot,J\cdot)$ are given by 
$$
d\omega(E_\alpha,E_\beta,E_\gamma)= N_{\alpha,\beta}(y_\alpha+y_\beta+y_\gamma), \qquad d^c\omega(E_\alpha,E_\beta,E_\gamma)= \im\epsilon_\alpha\epsilon_\beta\epsilon_\gamma N_{\alpha,\beta}(y_\alpha+y_\beta+y_\gamma), 
$$
whenever $\alpha+\beta+\gamma=0$, where $y_\alpha=-\im\epsilon_\alpha x_\alpha$ and $[E_\alpha,E_\beta]=N_{\alpha,\beta}E_{\alpha+\beta}$ (see \S\ref{roots}), and
$$
d\omega(A,E_\alpha,E_{-\alpha}) =\omega_\tg(A,H_\alpha)=g_\tg(J_\tg A,H_\alpha), \qquad d^c\omega(A,E_\alpha,E_{-\alpha}) =g_\tg(A,H_\alpha),
$$
for all $A\in\tg^c$.  
%On the other hand, 
%$$
%d\omega(A,e_\alpha,f_\alpha) = \omega_\ag(A,(\im H_\alpha)_\ag) 
%= g_\tg(J_\ag A, (\im H_\alpha)_\ag),
%$$
%where $(\cdot)_\ag$ denotes the projection on $\ag$ with respect to $\zg(\hg)=\zg(\kg)\oplus\ag$.  
\end{lemma}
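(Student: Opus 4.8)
The plan is to apply the Koszul formula for the exterior derivative of a left-invariant form. Since $\omega$ is left-invariant, for any $X,Y,Z\in\ggo^c$ one has the purely algebraic expression
\[
d\omega(X,Y,Z)=-\omega([X,Y],Z)-\omega([Y,Z],X)-\omega([Z,X],Y),
\]
and I would evaluate this on the complexified basis consisting of the elements of $\tg^c$ together with the root vectors $E_\alpha$, $\alpha\in\Delta$. Recall the brackets $[\tg^c,\tg^c]=0$, $[A,E_\alpha]=\alpha(A)E_\alpha$ for $A\in\tg^c$, and $[E_\alpha,E_\beta]=N_{\alpha,\beta}E_{\alpha+\beta}$, together with the fact that $\omega$ is block-diagonal: it pairs $\tg$ with $\tg$ and $E_\alpha$ only with $E_{-\alpha}$ (with $\omega(E_\alpha,E_{-\alpha})=y_\alpha$), and vanishes on any other pair of basis vectors.

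First I would carry out the bookkeeping that isolates the surviving components. In each summand of the Koszul formula a factor $\omega([\,\cdot\,,\cdot\,],\cdot)$ appears, and since a bracket either lands in $\tg^c$ (only when two opposite root vectors are paired) or is a multiple of $E_{\alpha+\beta}$, such a term can be nonzero only when the total root-sum of its three arguments vanishes. A direct case check over the four possible types of triples --- three elements of $\tg^c$; two in $\tg^c$ and one root vector; one in $\tg^c$ and two root vectors; three root vectors --- then shows that $d\omega$ vanishes identically except on the two families $(E_\alpha,E_\beta,E_\gamma)$ with $\alpha+\beta+\gamma=0$ and $(A,E_\alpha,E_{-\alpha})$ with $A\in\tg^c$.

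For the first family, the key algebraic input is the cyclic identity $N_{\alpha,\beta}=N_{\beta,\gamma}=N_{\gamma,\alpha}$ valid whenever $\alpha+\beta+\gamma=0$, which I would derive from the Jacobi identity applied to $E_\alpha,E_\beta,E_\gamma$ together with the linearity $H_\gamma=-H_\alpha-H_\beta$ and the linear independence of $H_\alpha,H_\beta$. Using this together with $\omega(E_{-\mu},E_\mu)=-y_\mu$, the three Koszul summands become $N_{\alpha,\beta}y_\gamma$, $N_{\beta,\gamma}y_\alpha$, $N_{\gamma,\alpha}y_\beta$, and the cyclic identity collapses them to $N_{\alpha,\beta}(y_\alpha+y_\beta+y_\gamma)$, as claimed. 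For the second family the two root-vector summands are $\mp\alpha(A)y_\alpha$ and cancel, leaving only the torus term $-\omega(H_\alpha,A)=\omega_\tg(A,H_\alpha)=g_\tg(J_\tg A,H_\alpha)$.

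Finally, the formulas for $d^c\omega=-d\omega(J\cdot,J\cdot,J\cdot)$ follow by substituting $JE_\alpha=\im\epsilon_\alpha E_\alpha$ and $JA=J_\tg A$ and pulling out the resulting scalar: on the first family this produces the factor $-(\im)^3\epsilon_\alpha\epsilon_\beta\epsilon_\gamma=\im\epsilon_\alpha\epsilon_\beta\epsilon_\gamma$, while on the second family $(\im\epsilon_\alpha)(-\im\epsilon_\alpha)=1$ reduces the computation to $-d\omega(J_\tg A,E_\alpha,E_{-\alpha})=-g_\tg(J_\tg^2A,H_\alpha)=g_\tg(A,H_\alpha)$. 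I expect the main obstacle to be purely organizational: correctly tracking the interlocking sign conventions (the antisymmetry and cyclic relations for $N_{\alpha,\beta}$, the sign flips $y_{-\alpha}=-y_\alpha$ and $\epsilon_{-\alpha}=-\epsilon_\alpha$, and the powers of $\im$), and making sure the normalization of the $E_\alpha$ fixed in \S\ref{roots} is exactly the one under which the cyclic identity and $H_\gamma=-H_\alpha-H_\beta$ hold.
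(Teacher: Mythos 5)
Your proposal is correct and follows essentially the same route as the paper: both evaluate the Chevalley/Koszul formula for the differential of a left-invariant $2$-form on the basis $\tg^c\cup\{E_\alpha\}$, use that $\omega$ only pairs $E_\alpha$ with $E_{-\alpha}$ to isolate the two surviving families, invoke the cyclic identity $N_{\alpha,\beta}=N_{\beta,\gamma}=N_{\gamma,\alpha}$ (stated in the paper's appendix, so no need to rederive it from Jacobi), and obtain $d^c\omega$ by pulling out the scalars $\im\epsilon_\alpha$ and $J_\tg$. Your sign bookkeeping ($y_{-\alpha}=-y_\alpha$, $\epsilon_{-\alpha}=-\epsilon_\alpha$, $-\im^3=\im$, cancellation of the two root-vector terms in the mixed case) matches the paper's computation exactly.
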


\begin{remark}
In particular, $d\omega$ is always nonzero, in accordance with the well-known fact that $(J,g)$ is never K\"ahler.  It also follows that the Hermitian structure $(J_\qg,g_\qg)$ on the full flag manifold $G/T$ is K\"ahler if and only if $x_{\alpha+\beta}=x_\alpha+x_\beta$ for all $\alpha,\beta, \alpha+\beta\in\Delta^+$.  
\end{remark}

\begin{proof}
For any $\alpha,\beta,\gamma\in\Delta$, 
\begin{align*}
d\omega(E_\alpha,E_\beta,E_\gamma) &= 
-N_{\alpha,\beta}\omega_\qg(E_{\alpha+\beta},E_\gamma) 
+ N_{\alpha,\gamma}\omega_\qg(E_{\alpha+\gamma},E_\beta) 
- N_{\beta,\gamma}\omega_\qg(E_{\beta+\gamma},E_\alpha) \\ 
&= N_{\alpha,\beta}y_\gamma 
- N_{\alpha,\gamma}y_\beta 
+ N_{\beta,\gamma}y_\alpha 
= N_{\alpha,\beta}(y_\alpha+y_\beta+y_\gamma), 
\end{align*}
if $\alpha+\beta+\gamma=0$ and zero otherwise.  On the other hand, if $A\in\tg^c$, then
\begin{align*}
d\omega(A,E_\alpha,E_\beta) &= 
-\alpha(A)\omega_\qg(E_\alpha,E_\beta) 
+ \beta(A)\omega_\qg(E_\beta,E_\alpha) 
- \omega([E_\alpha,E_\beta],A) \\ 
&= \left\{\begin{array}{l} 
0, \qquad \alpha+\beta\ne 0, \\ 
\omega_\tg(A,H_\alpha), \qquad \alpha+\beta= 0.  
\end{array}\right.
\end{align*}
The only other possibilities, say $d\omega(A,B,E_\alpha)$ and $d\omega(A,B,C)$ for $B,C\in\tg^c$, are easily seen to vanish, concluding the proof.
\end{proof}

Left-invariant $1$-forms and closed $2$-forms on $G$ are all respectively given by 
$$
\theta_X:=g(\cdot,X),  \qquad \sigma_X:=\kil_\ggo([\cdot,\cdot],X), \quad X\in\ggo,
$$ 
and using that $\sigma_{PX}=d\theta_X$ for all $X$, where $g=\gk(P\cdot,\cdot)$, one obtains the well-known fact that the first and second Betti numbers of $G$ both vanish (see e.g.\ \cite[Section 2]{H3}).  
   
The following vectors in $\tg$ play a crucial role in the geometry of the Hermitian manifold $(G,J,g)$ (see e.g.\ \cite{AlkPrl,Grn,Ksz,Pds}): 
\begin{equation}\label{koszul-g}
Z_{J_\qg}:=\sum_{\alpha\in\Delta^+} \im H_\alpha, \qquad
Z_{J_\qg,g_\qg}:=\sum_{\alpha\in\Delta^+}\tfrac{1}{x_\alpha}  \im H_\alpha.  
\end{equation}
Note that $Z_{J_\qg}=Z_{J_\qg,\gk|_\qg}$.  

\begin{lemma}\label{dto} 
For any left and $\Ad(T)$-invariant Hermitian structure $(J,g)$ on a compact semisimple Lie group $G$,
$$
d_g^*\omega = -\theta_{Z_{J_\qg,g_\qg}} \quad\mbox{and}\quad 
dd_g^*\omega= -\sigma_{P_\tg Z_{J_\qg,g_\qg}}, 
\quad\mbox{where} \quad g_\tg=\gk(P_\tg\cdot,\cdot). 
$$
\end{lemma}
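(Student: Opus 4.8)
The plan is to first compute the codifferential $d_g^*\omega$ as the metric adjoint of $d$ acting on left-invariant forms, and then deduce the second identity by applying $d$ to the first and invoking the relation $d\theta_X=\sigma_{PX}$ recorded above. Since $G$ is compact, left translations are isometries of $g$, so $d_g^*$ preserves left-invariant forms and, $G$ being unimodular, its restriction to left-invariant forms agrees with the purely algebraic adjoint of $d$ on $\Lambda^\bullet\ggo^*$. Thus, fixing a $g$-orthonormal basis $\{e_i\}$ of $\ggo$ and using $de^k(e_i,e_j)=-g([e_i,e_j],e_k)$, one gets
$$
(d_g^*\omega)(e_k)=\langle\omega,de^k\rangle=-\unm\sum_{i,j}\omega(e_i,e_j)\,g([e_i,e_j],e_k),
$$
so that $d_g^*\omega=-\unm\,\theta_W$ with $W:=\sum_{i,j}\omega(e_i,e_j)[e_i,e_j]$. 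Since $\omega(X,Y)=g(JX,Y)$, the inner sum collapses via $\sum_j\omega(e_i,e_j)e_j=Je_i$, giving the compact expression $W=\sum_i[e_i,Je_i]$.

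Next I would evaluate $W$. As $J$ preserves the $g$-orthogonal splitting $\ggo=\tg\oplus\qg$, each summand $[e_i,Je_i]$ stays within one block, and the $\tg$-block contributes nothing because $\tg$ is abelian. For the $\qg$-block I use the $g$-orthonormal basis $\{u_\alpha,Ju_\alpha\}_{\alpha\in\Delta^+}$ with $u_\alpha:=\tfrac{1}{\sqrt{x_\alpha}}e_\alpha$; pairing the two vectors attached to each $\alpha$ and using $J^2=-I$ yields
$$
W=\sum_{\alpha\in\Delta^+}\big([u_\alpha,Ju_\alpha]+[Ju_\alpha,-u_\alpha]\big)=\sum_{\alpha\in\Delta^+}\tfrac{2}{x_\alpha}[e_\alpha,Je_\alpha].
$$
It then remains to verify the single-root identity $[e_\alpha,Je_\alpha]=\im H_\alpha$, which is the computational heart of the argument: writing the $\gk$-unit real root vectors as $e_\alpha=\tfrac1{\sqrt2}(E_\alpha-E_{-\alpha})$ and $Je_\alpha=\tfrac1{\sqrt2}\,\im(E_\alpha+E_{-\alpha})$ (the normalization forced by $g(E_\alpha,E_{-\alpha})=-x_\alpha$, i.e.\ $\kil_\ggo(E_\alpha,E_{-\alpha})=1$), a direct bracket computation using $[E_\alpha,E_{-\alpha}]=H_\alpha$ gives $[e_\alpha,Je_\alpha]=\im H_\alpha$. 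Substituting back, $W=\sum_{\alpha\in\Delta^+}\tfrac{2}{x_\alpha}\im H_\alpha=2Z_{J_\qg,g_\qg}$, and hence $d_g^*\omega=-\unm\theta_{2Z_{J_\qg,g_\qg}}=-\theta_{Z_{J_\qg,g_\qg}}$, the first claimed formula.

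Finally, for the second identity I would apply $d$ and use $d\theta_X=\sigma_{PX}$ with $g=\gk(P\cdot,\cdot)$, obtaining $dd_g^*\omega=-d\theta_{Z_{J_\qg,g_\qg}}=-\sigma_{PZ_{J_\qg,g_\qg}}$. Since $g$ and $\gk$ are both block-diagonal for the $\gk$-orthogonal splitting $\ggo=\tg\oplus\qg$, the $\gk$-self-adjoint endomorphism $P$ preserves $\tg$ and restricts there to $P_\tg$; as $Z_{J_\qg,g_\qg}=\sum_{\alpha\in\Delta^+}\tfrac1{x_\alpha}\im H_\alpha\in\tg$, we get $PZ_{J_\qg,g_\qg}=P_\tg Z_{J_\qg,g_\qg}$, yielding $dd_g^*\omega=-\sigma_{P_\tg Z_{J_\qg,g_\qg}}$. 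The only delicate point is the root-vector normalization underlying $[e_\alpha,Je_\alpha]=\im H_\alpha$; everything else is formal once the algebraic adjoint formula for $d_g^*$ on left-invariant forms is in place.
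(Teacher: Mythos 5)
Your proposal is correct and takes essentially the same route as the paper's proof: both compute $d_g^*\omega$ via the pointwise adjointness of $d$ on left-invariant forms (the paper evaluates $g(\omega,d\theta_X)$ directly on the $g$-orthonormal pairs $\tfrac{1}{\sqrt{x_\alpha}}e_\alpha,\tfrac{1}{\sqrt{x_\alpha}}J_\qg e_\alpha$, while you first repackage the sum as $W=\sum_i[e_i,Je_i]$), both reduce to the bracket identity $[e_\alpha,J_\qg e_\alpha]=\im H_\alpha$, and both deduce the second formula from $d\theta_X=\sigma_{PX}$ together with $P\tg\subset\tg$.
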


\begin{remark}
In particular, $d_g^*\omega =-g(\cdot,Z_{J_\qg,g_\qg})$ and 
$$
dd_g^*\omega= -\kil_\ggo([\cdot,\cdot],P_\tg Z_{J_\qg,g_\qg}) =g([\cdot,\cdot],Z_{J_\qg,g_\qg}).
$$  
\end{remark}

\begin{proof}
For any $X\in\ggo$, we have that 
\begin{align*}
d_g^*\omega(X) &= g(d_g^*\omega,\theta_X) = g(\omega,d\theta_X) 
%=\red\unm\black\sum\omega(X_i,X_j)d\theta_X(X_i,X_j) \\ 
%&= -\unm\sum\omega(X_i,X_j)\theta_X([X_i,X_j]_\pg) 
= -\sum_{\alpha\in\Delta_\qg^+} \tfrac{1}{x_\alpha^2}\omega(e_\alpha,J_\qg e_\alpha)\theta_X([e_\alpha,J_\qg e_\alpha]) \\ 
&= -\sum_{\alpha\in\Delta_\qg^+} \tfrac{1}{x_\alpha}\theta_X(\im H_\alpha) 
= -\theta_X(Z_{J_\qg,g_\qg}) = -g(X,Z_{J_\qg,g_\qg}), 
\end{align*}
concluding the proof.
\end{proof}

\section{Pluriclosed metrics}\label{SKT-sec} 

A Hermitian structure $(J,g)$ is called {\it pluriclosed} if $dd^c\omega=0$ (or equivalently, $\partial\overline{\partial}\omega=0$).  We start this section by computing the $4$-form $dd^c\omega$ in our setting.  
 
\begin{lemma}\label{ddco}
For any left and $\Ad(T)$-invariant Hermitian structure $(J,g)$ on a compact semisimple Lie group $G$, the only possibly nonzero components of the $4$-form $dd^c\omega$ are given by 
\begin{align*}
dd^c\omega(E_\alpha,E_{-\alpha},E_\beta,E_{-\beta})
=& -2g_\tg(H_\alpha,H_\beta) 
- 2N_{\alpha,\beta}^2(x_{\alpha+\beta}-x_{\alpha}-x_{\beta}) \\
& - 2\epsilon_{\alpha-\beta}N_{\alpha,-\beta}^2(\epsilon_{\alpha-\beta}x_{\alpha-\beta}-x_{\alpha}+x_\beta), \qquad \forall \alpha,\beta\in\Delta^+, \alpha\ne\beta,
\end{align*}
and if $\alpha+\beta+\gamma+\delta=0$ and all pairs add nonzero, then for all $\alpha,\beta\in\Delta^+$, $\gamma,\delta\in\Delta^-$,   
\begin{align*}
dd^c\omega(E_\alpha,E_\beta,E_\gamma,E_\delta) 
=& N_{\alpha,\beta} N_{\gamma,\delta} (x_\alpha+x_\beta+x_\gamma+x_\delta-2x_{\alpha+\beta}) \\ 
&- \epsilon_{\alpha+\gamma}N_{\alpha,\gamma} N_{\beta,\delta}(-x_\alpha+x_\beta+x_\gamma-x_\delta+2\epsilon_{\alpha+\gamma}x_{\alpha+\gamma}) \\
&+ \epsilon_{\alpha+\delta}N_{\alpha,\delta}  N_{\beta,\gamma} (-x_\alpha+x_\beta-x_\gamma+x_\delta+2\epsilon_{\alpha+\delta}x_{\alpha+\delta}).
\end{align*}
\end{lemma}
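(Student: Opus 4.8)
The plan is to obtain $dd^c\omega$ by differentiating the explicit $3$-form $d^c\omega$ computed in Lemma \ref{dco}, using the Koszul (Chevalley--Eilenberg) formula on the Lie algebra $\ggo^c$: for a left-invariant $3$-form $\psi$ and $X_0,X_1,X_2,X_3\in\ggo^c$,
\[
d\psi(X_0,X_1,X_2,X_3)=\sum_{0\le i<j\le 3}(-1)^{i+j}\,\psi\big([X_i,X_j],X_0,\dots,\widehat{X_i},\dots,\widehat{X_j},\dots,X_3\big).
\]
Since $dd^c\omega$ is $\Ad(T)$-invariant, its only possibly nonzero components are those of weight zero, i.e.\ evaluated on tuples of root vectors (and elements of $\tg^c$) whose roots add up to $0$; this reduces the task to the two displayed families together with the mixed components carrying a $\tg^c$-slot, which I will show vanish. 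Throughout I substitute $y_\mu=-\im\epsilon_\mu x_\mu$ into Lemma \ref{dco} to record the root-triple part in the form $d^c\omega(E_{\mu_1},E_{\mu_2},E_{\mu_3})=\epsilon_{\mu_1}\epsilon_{\mu_2}\epsilon_{\mu_3}N_{\mu_1,\mu_2}(\epsilon_{\mu_1}x_{\mu_1}+\epsilon_{\mu_2}x_{\mu_2}+\epsilon_{\mu_3}x_{\mu_3})$, valid when $\mu_1+\mu_2+\mu_3=0$, and the $\tg^c$-part as $d^c\omega(A,E_\mu,E_{-\mu})=g_\tg(A,H_\mu)$.

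For the first family I take $(X_0,X_1,X_2,X_3)=(E_\alpha,E_{-\alpha},E_\beta,E_{-\beta})$ with $\alpha,\beta\in\Delta^+$, $\alpha\ne\beta$, and evaluate the six bracket terms. The two brackets landing in $\tg^c$, namely $[E_\alpha,E_{-\alpha}]=H_\alpha$ and $[E_\beta,E_{-\beta}]=H_\beta$, feed into the $\tg^c$-formula and each contribute $-g_\tg(H_\alpha,H_\beta)$, producing the term $-2g_\tg(H_\alpha,H_\beta)$. The remaining four brackets $[E_{\pm\alpha},E_{\pm\beta}]=N_{\pm\alpha,\pm\beta}E_{\pm\alpha\pm\beta}$ feed into the root-triple formula. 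Pairing the $(\alpha,\beta)$- and $(-\alpha,-\beta)$-brackets and, separately, the $(\alpha,-\beta)$- and $(-\alpha,\beta)$-brackets, and using the Chevalley-basis identities $N_{-\mu,-\nu}=-N_{\mu,\nu}$ together with $N_{\mu,\nu}=N_{\nu,\lambda}=N_{\lambda,\mu}$ whenever $\mu+\nu+\lambda=0$, the two pairs collapse respectively to $-2N_{\alpha,\beta}^2(x_{\alpha+\beta}-x_\alpha-x_\beta)$ and to $-2\epsilon_{\alpha-\beta}N_{\alpha,-\beta}^2(\epsilon_{\alpha-\beta}x_{\alpha-\beta}-x_\alpha+x_\beta)$, the factor $\epsilon_{\alpha-\beta}$ recording whether $\alpha-\beta$ is a positive or negative root (all such terms being understood to vanish when the relevant sum is not a root).

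For the second family I take $(X_0,X_1,X_2,X_3)=(E_\alpha,E_\beta,E_\gamma,E_\delta)$ with $\alpha+\beta+\gamma+\delta=0$ and every pairwise sum nonzero. Now no bracket lands in $\tg^c$, so only the root-triple formula enters, and the six terms group according to the three ways of splitting the four indices into two pairs, corresponding to the sums $\{\alpha+\beta,\gamma+\delta\}$, $\{\alpha+\gamma,\beta+\delta\}$, $\{\alpha+\delta,\beta+\gamma\}$. Each pair recombines, again via $N_{-\mu,-\nu}=-N_{\mu,\nu}$ and the cyclic identity for sum-zero triples, into the corresponding line of the stated expression; the signs $\epsilon_{\alpha+\gamma}$, $\epsilon_{\alpha+\delta}$ and the coefficients $-2x_{\alpha+\beta}$, $2\epsilon_{\alpha+\gamma}x_{\alpha+\gamma}$, $2\epsilon_{\alpha+\delta}x_{\alpha+\delta}$ emerge purely from the bookkeeping of $\epsilon_\mu=\pm1$ on $\Delta^\pm$ and $x_{-\mu}=x_\mu$.

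Finally, to justify that these are the only nonzero components I check the mixed cases. For $dd^c\omega(A,E_\alpha,E_\beta,E_\gamma)$ with $A\in\tg^c$ and $\alpha+\beta+\gamma=0$, the three brackets $[A,E_\mu]=\mu(A)E_\mu$ contribute $-(\alpha+\beta+\gamma)(A)\,d^c\omega(E_\alpha,E_\beta,E_\gamma)=0$, while the three brackets $[E_\mu,E_\nu]\in\CC E_{-\lambda}$ contribute, after the cyclic identity, a multiple of $g_\tg(A,H_\alpha+H_\beta+H_\gamma)$, which vanishes since $H_\alpha+H_\beta+H_\gamma=0$ for $\alpha+\beta+\gamma=0$; a similar cancellation handles $dd^c\omega(A,B,E_\alpha,E_{-\alpha})$, and all remaining slot-configurations vanish for weight or degree reasons. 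The main obstacle throughout is not conceptual but organizational: correctly tracking the signs $\epsilon_\mu$ and the structure constants $N_{\mu,\nu}$ across the six Koszul terms and pairing them so that the factors of $2$ and the clean differences of the $x_\mu$'s appear, the two structure-constant identities above being exactly what make these cancellations work.
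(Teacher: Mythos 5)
Your overall strategy is exactly the paper's: apply the Chevalley--Eilenberg formula to the $3$-form $d^c\omega$ computed in Lemma \ref{dco}, split the six bracket terms according to whether they land in $\tg^c$ or in a root space, and recombine using $N_{-\mu,-\nu}=-N_{\mu,\nu}$ and the cyclic identity for sum-zero triples. The two displayed computations, and the vanishing of the components $dd^c\omega(A,E_\alpha,E_\beta,E_\gamma)$ and $dd^c\omega(A,B,E_\alpha,E_{-\alpha})$ via $H_\alpha+H_\beta+H_\gamma=H_{\alpha+\beta+\gamma}=0$, all match the paper's proof.

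However, there is a genuine gap in your reduction step. You claim that $\Ad(T)$-invariance (weight zero) reduces the problem to the two displayed families plus the configurations carrying a $\tg^c$-slot. This is false: there are weight-zero quadruples of root vectors with \emph{three} positive roots and one negative root (or vice versa), all pairwise sums nonzero, which belong to neither displayed family. For instance, in $\sug(4)$ take $(E_{\alpha_1},E_{\alpha_2},E_{\alpha_3},E_{-(\alpha_1+\alpha_2+\alpha_3)})$, or in $\Gg_2$ take $(E_\alpha,E_\beta,E_{2\alpha+\beta},E_{-(3\alpha+2\beta)})$. Since the lemma asserts these components vanish, your proof of the ``only possibly nonzero components'' claim is incomplete exactly here. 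The paper handles this case by an explicit computation: expanding the six Koszul terms for $\alpha,\beta,\gamma\in\Delta^+$, $\delta\in\Delta^-$, the $x$-dependent factors all collapse to the common value $x_\alpha+x_\beta+x_\gamma-x_\delta$, so the component equals
\begin{equation*}
\left(N_{\alpha,\beta}N_{\gamma,\delta}-N_{\alpha,\gamma}N_{\beta,\delta}+N_{\alpha,\delta}N_{\beta,\gamma}\right)(x_\alpha+x_\beta+x_\gamma-x_\delta)=0
\end{equation*}
by the quadratic structure-constant identity of \S\ref{roots}. Alternatively (as the paper remarks), one can avoid the computation entirely by observing that $dd^c\omega=-\partial\overline{\partial}\omega$ is a $(2,2)$-form, so any component with a $(3,1)$ or $(1,3)$ splitting of holomorphic types vanishes a priori. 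Either argument would close your gap; without one of them, the lemma's exhaustiveness claim is unproved.
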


\begin{remark}
Using the properties of the numbers $N_{\alpha,\beta}$ given in \S\ref{roots}, one can easily check that $\gk$ is therefore always pluriclosed.  However, recall that there is a large class of left-invariant complex structures on $G$ which are not compatible with any bi-invariant metric.   
\end{remark}

\begin{remark}
These formulas are in accordance with those given in \cite{AlkDvd} for full flag manifolds.  
\end{remark}

\begin{proof}
We use the formula for $d^c\omega$ given in Lemma \ref{dco}.  If $\alpha+\beta+\gamma=0$, then for any $A\in\tg^c$, 
\begin{align*}
& dd^c\omega(A,E_\alpha,E_\beta,E_\gamma) \\ 
=& -\alpha(A)d^c\omega(E_\alpha,E_\beta,E_\gamma) 
+\beta(A)d^c\omega(E_\beta,E_\alpha,E_\gamma)
-\gamma(A)d^c\omega(E_\gamma,E_\alpha,E_\beta)\\ 
&-N_{\alpha,\beta}d^c\omega(E_{\alpha+\beta},A,E_\gamma) 
+ N_{\alpha,\gamma}d^c\omega(E_{\alpha+\gamma},A,E_\beta) 
- N_{\beta,\gamma}d^c\omega(E_{\beta+\gamma},A,E_\alpha) \\ 
%=& -(\alpha+\beta+\gamma)(A)d^c\omega(E_\alpha,E_\beta,E_\gamma) \\  
%&-N_{\alpha,\beta}d^c\omega(E_{\alpha+\beta},A,E_\gamma) 
%+ N_{\alpha,\gamma}d^c\omega(E_{\alpha+\gamma},A,E_\beta) 
%- N_{\beta,\gamma}d^c\omega(E_{\beta+\gamma},A,E_\alpha) \\ 
=& -(\alpha+\beta+\gamma)(A)d^c\omega(E_\alpha,E_\beta,E_\gamma) \\  
&-N_{\alpha,\beta}d^c\omega(A,E_\gamma,E_{\alpha+\beta}) 
+ N_{\alpha,\gamma}d^c\omega(A,E_\beta,E_{\alpha+\gamma}) 
- N_{\beta,\gamma}d^c\omega(A,E_\alpha,E_{\beta+\gamma}) \\ 
%=& -(\alpha+\beta+\gamma)(A)d^c\omega(E_\alpha,E_\beta,E_\gamma) \\  
=&-N_{\alpha,\beta}g_\tg(A,H_\gamma) 
+ N_{\alpha,\gamma}g_\tg(A,H_\beta) 
- N_{\beta,\gamma}g_\tg(A,H_\alpha) \\
=&-N_{\alpha,\beta}g_\tg(A,H_\alpha+H_\beta+H_\gamma) 
=-N_{\alpha,\beta}g_\tg(A,H_{\alpha+\beta+\gamma})=0. 
\end{align*}
For $A,B\in\tg^c$ and $\alpha+\beta=0$, 
\begin{align*}
dd^c\omega(A,B,E_\alpha,E_\beta)  
=& \alpha(A)d^c\omega(E_\alpha,B,E_\beta) 
-\beta(A)d^c\omega(E_\beta,B,E_\alpha)
-\alpha(B)d^c\omega(E_\alpha,A,E_\beta) \\ 
&+\beta(B)d^c\omega(E_\beta,A,E_\alpha)
-N_{\alpha,\beta}d^c\omega(E_{\alpha+\beta},A,B) \\ 
=&-(\alpha+\beta)(A)d^c\omega(B,E_\alpha,E_\beta) 
+(\alpha+\beta)(A)d^c\omega(A,E_\alpha,E_\beta) =0.
\end{align*}
If $\alpha+\beta+\gamma+\delta=0$ and all pairs add nonzero, then
\begin{align*}
& dd^c\omega(E_\alpha,E_\beta,E_\gamma,E_\delta) \\ 
%=& -N_{\alpha,\beta}d^c\omega(E_{\alpha+\beta},E_\gamma,E_\delta) 
%+ N_{\alpha,\gamma}d^c\omega(E_{\alpha+\gamma},E_\beta,E_\delta) 
%- N_{\alpha,\delta}d^c\omega(E_{\alpha+\delta},E_\beta,E_\gamma) \\
%& - N_{\beta,\gamma}d^c\omega(E_{\beta+\gamma},E_\alpha,E_\delta) 
%+ N_{\beta,\delta}d^c\omega(E_{\beta+\delta},E_\alpha,E_\gamma)
%- N_{\gamma,\delta}d^c\omega(E_{\gamma+\delta},E_\alpha,E_\beta) \\ 
%
=& -N_{\alpha,\beta}d^c\omega(E_{\alpha+\beta},E_\gamma,E_\delta) 
- N_{\gamma,\delta}d^c\omega(E_{\gamma+\delta},E_\alpha,E_\beta) \\ 
&+ N_{\alpha,\gamma}d^c\omega(E_{\alpha+\gamma},E_\beta,E_\delta) 
+ N_{\beta,\delta}d^c\omega(E_{\beta+\delta},E_\alpha,E_\gamma) \\
&- N_{\alpha,\delta}d^c\omega(E_{\alpha+\delta},E_\beta,E_\gamma) 
- N_{\beta,\gamma}d^c\omega(E_{\beta+\gamma},E_\alpha,E_\delta) \\
=& -N_{\alpha,\beta}\im \epsilon_{\alpha+\beta}\epsilon_\gamma\epsilon_\delta N_{\gamma,\delta} (y_{\alpha+\beta}+y_\gamma+y_\delta) 
- N_{\gamma,\delta}\im \epsilon_{\gamma+\delta}\epsilon_\alpha\epsilon_\beta N_{\alpha,\beta} (y_{\gamma+\delta}+y_\alpha+y_\beta) \\ 
&+ N_{\alpha,\gamma}\im \epsilon_{\alpha+\gamma}\epsilon_\beta\epsilon_\delta N_{\beta,\delta}(y_{\alpha+\gamma}+y_\beta+y_\delta)  
+ N_{\beta,\delta} \im\epsilon_{\beta+\delta}\epsilon_\alpha\epsilon_\gamma N_{\alpha,\gamma} (y_{\beta+\delta}+y_\alpha+y_\gamma) \\
&- N_{\alpha,\delta}\im \epsilon_{\alpha+\delta}\epsilon_\beta\epsilon_\gamma N_{\beta,\gamma} (y_{\alpha+\delta}+y_\beta+y_\gamma) 
- N_{\beta,\gamma}\im \epsilon_{\beta+\gamma}\epsilon_\alpha\epsilon_\delta N_{\alpha,\delta}(y_{\beta+\gamma}+y_\alpha+y_\delta). 
\end{align*}
In particular, for $\alpha,\beta,\gamma\in\Delta^+$, $\delta\in\Delta^-$, we obtain
\begin{align*}
& dd^c\omega(E_\alpha,E_\beta,E_\gamma,E_\delta) \\ 
%=& -N_{\alpha,\beta}\im \epsilon_{\alpha+\beta}\epsilon_\gamma\epsilon_\delta N_{\gamma,\delta} (y_{\alpha+\beta}+y_\gamma+y_\delta) 
%- N_{\gamma,\delta}\im \epsilon_{\gamma+\delta}\epsilon_\alpha\epsilon_\beta N_{\alpha,\beta} (y_{\gamma+\delta}+y_\alpha+y_\beta) \\ 
%&+ N_{\alpha,\gamma}\im \epsilon_{\alpha+\gamma}\epsilon_\beta\epsilon_\delta N_{\beta,\delta}(y_{\alpha+\gamma}+y_\beta+y_\delta)  
%+ N_{\beta,\delta} \im\epsilon_{\beta+\delta}\epsilon_\alpha\epsilon_\gamma N_{\alpha,\gamma} (y_{\beta+\delta}+y_\alpha+y_\gamma) \\
%&- N_{\alpha,\delta}\im \epsilon_{\alpha+\delta}\epsilon_\beta\epsilon_\gamma N_{\beta,\gamma} (y_{\alpha+\delta}+y_\beta+y_\gamma) 
%- N_{\beta,\gamma}\im \epsilon_{\beta+\gamma}\epsilon_\alpha\epsilon_\delta N_{\alpha,\delta}(y_{\beta+\gamma}+y_\alpha+y_\delta). \\
%
%=& N_{\alpha,\beta} N_{\gamma,\delta} (x_{\alpha+\beta}+x_\gamma-x_\delta) 
%- \epsilon_{\gamma+\delta}N_{\gamma,\delta} N_{\alpha,\beta} (\epsilon_{\gamma+\delta}x_{\gamma+\delta}+x_\alpha+x_\beta) \\ 
%& - N_{\alpha,\gamma}  N_{\beta,\delta}(x_{\alpha+\gamma}+x_\beta-x_\delta)  
%+ \epsilon_{\beta+\delta}N_{\beta,\delta}  N_{\alpha,\gamma} (\epsilon_{\beta+\delta}x_{\beta+\delta}+x_\alpha+x_\gamma) \\
%&-  \epsilon_{\alpha+\delta}N_{\alpha,\delta} N_{\beta,\gamma} (\epsilon_{\alpha+\delta}x_{\alpha+\delta}+x_\beta+x_\gamma) 
%+ N_{\beta,\gamma} N_{\alpha,\delta}(x_{\beta+\gamma}+x_\alpha-x_\delta). \\ 
%
=& N_{\alpha,\beta} N_{\gamma,\delta} (x_{\alpha+\beta}+x_\gamma-x_\delta) 
+N_{\gamma,\delta} N_{\alpha,\beta} (-x_{\gamma+\delta}+x_\alpha+x_\beta) \\ 
& - N_{\alpha,\gamma}  N_{\beta,\delta}(x_{\alpha+\gamma}+x_\beta-x_\delta)  
-N_{\beta,\delta}  N_{\alpha,\gamma} (-x_{\beta+\delta}+x_\alpha+x_\gamma) \\
&+N_{\alpha,\delta} N_{\beta,\gamma} (-x_{\alpha+\delta}+x_\beta+x_\gamma) 
+ N_{\beta,\gamma} N_{\alpha,\delta}(x_{\beta+\gamma}+x_\alpha-x_\delta) \\ 
%
%=& N_{\alpha,\beta} N_{\gamma,\delta} (x_\alpha+x_\beta+x_\gamma-x_\delta) \\ 
%& - N_{\alpha,\gamma}  N_{\beta,\delta}(x_\alpha+x_\beta+x_\gamma-x_\delta) \\
%&+N_{\alpha,\delta} N_{\beta,\gamma} (x_\alpha+x_\beta+x_\gamma-x_\delta) \\
%
=& (N_{\alpha,\beta} N_{\gamma,\delta} - N_{\alpha,\gamma}N_{\beta,\delta}+N_{\alpha,\delta} N_{\beta,\gamma})(x_\alpha+x_\beta+x_\gamma-x_\delta) =0.  
\end{align*}
Note that the above computation can be skipped by using that $dd^c\omega=-\partial\overline{\partial}\omega$ is a $(2,2)$-form.  On the other hand, for $\alpha,\beta\in\Delta^+$, $\gamma,\delta\in\Delta^-$, we have that  
\begin{align*}
& dd^c\omega(E_\alpha,E_\beta,E_\gamma,E_\delta) \\ 
%=& -N_{\alpha,\beta}\im \epsilon_{\alpha+\beta}\epsilon_\gamma\epsilon_\delta N_{\gamma,\delta} (y_{\alpha+\beta}+y_\gamma+y_\delta) 
%- N_{\gamma,\delta}\im \epsilon_{\gamma+\delta}\epsilon_\alpha\epsilon_\beta N_{\alpha,\beta} (y_{\gamma+\delta}+y_\alpha+y_\beta) \\ 
%&+ N_{\alpha,\gamma}\im \epsilon_{\alpha+\gamma}\epsilon_\beta\epsilon_\delta N_{\beta,\delta}(y_{\alpha+\gamma}+y_\beta+y_\delta)  
%+ N_{\beta,\delta} \im\epsilon_{\beta+\delta}\epsilon_\alpha\epsilon_\gamma N_{\alpha,\gamma} (y_{\beta+\delta}+y_\alpha+y_\gamma) \\
%&- N_{\alpha,\delta}\im \epsilon_{\alpha+\delta}\epsilon_\beta\epsilon_\gamma N_{\beta,\gamma} (y_{\alpha+\delta}+y_\beta+y_\gamma) 
%- N_{\beta,\gamma}\im \epsilon_{\beta+\gamma}\epsilon_\alpha\epsilon_\delta N_{\alpha,\delta}(y_{\beta+\gamma}+y_\alpha+y_\delta). \\
%
=& -N_{\alpha,\beta} N_{\gamma,\delta} (x_{\alpha+\beta}-x_\gamma-x_\delta) 
+ N_{\gamma,\delta} N_{\alpha,\beta} (-x_{\gamma+\delta}+x_\alpha+x_\beta) \\ 
&- \epsilon_{\alpha+\gamma}N_{\alpha,\gamma} N_{\beta,\delta}(\epsilon_{\alpha+\gamma}x_{\alpha+\gamma}+x_\beta-x_\delta)  
- \epsilon_{\beta+\delta}N_{\beta,\delta}  N_{\alpha,\gamma} (\epsilon_{\beta+\delta}x_{\beta+\delta}+x_\alpha-x_\gamma) \\
&+ \epsilon_{\alpha+\delta}N_{\alpha,\delta}  N_{\beta,\gamma} (\epsilon_{\alpha+\delta}x_{\alpha+\delta}+x_\beta-x_\gamma) 
+ \epsilon_{\beta+\gamma}N_{\beta,\gamma}  N_{\alpha,\delta}(\epsilon_{\beta+\gamma}x_{\beta+\gamma}+x_\alpha-x_\delta),   \\ 
%
%=& N_{\alpha,\beta} N_{\gamma,\delta} (x_\alpha+x_\beta+x_\gamma+x_\delta-2x_{\alpha+\beta}) \\ 
%&- \epsilon_{\alpha+\gamma}N_{\alpha,\gamma} N_{\beta,\delta}(\epsilon_{\alpha+\gamma}x_{\alpha+\gamma}+x_\beta-x_\delta-\epsilon_{\beta+\delta}x_{\beta+\delta}-x_\alpha+x_\gamma) \\
%&+ \epsilon_{\alpha+\delta}N_{\alpha,\delta}  N_{\beta,\gamma} (\epsilon_{\alpha+\delta}x_{\alpha+\delta}+x_\beta-x_\gamma-\epsilon_{\beta+\gamma}x_{\beta+\gamma}-x_\alpha+x_\delta), \\
%%
%=& N_{\alpha,\beta} N_{\gamma,\delta} (x_\alpha+x_\beta+x_\gamma+x_\delta-2x_{\alpha+\beta}) \\ 
%&- \epsilon_{\alpha+\gamma}N_{\alpha,\gamma} N_{\beta,\delta}(-x_\alpha+x_\beta+x_\gamma-x_\delta+\epsilon_{\alpha+\gamma}(x_{\alpha+\gamma}+x_{\beta+\delta})) \\
%&+ \epsilon_{\alpha+\delta}N_{\alpha,\delta}  N_{\beta,\gamma} (-x_\alpha+x_\beta-x_\gamma+x_\delta+\epsilon_{\alpha+\delta}(x_{\alpha+\delta}+x_{\beta+\gamma})), \\
%
=& N_{\alpha,\beta} N_{\gamma,\delta} (x_\alpha+x_\beta+x_\gamma+x_\delta-2x_{\alpha+\beta}) \\ 
&- \epsilon_{\alpha+\gamma}N_{\alpha,\gamma} N_{\beta,\delta}(-x_\alpha+x_\beta+x_\gamma-x_\delta+2\epsilon_{\alpha+\gamma}x_{\alpha+\gamma}) \\
&+ \epsilon_{\alpha+\delta}N_{\alpha,\delta}  N_{\beta,\gamma} (-x_\alpha+x_\beta-x_\gamma+x_\delta+2\epsilon_{\alpha+\delta}x_{\alpha+\delta}).
\end{align*}
Finally, if $\alpha,\beta\in\Delta^+$, $\alpha\pm\beta\ne 0$ then
\begin{align*}
& dd^c\omega(E_\alpha,E_{-\alpha},E_\beta,E_{-\beta}) \\ 
=& -d^c\omega(H_\alpha,E_\beta,E_{-\beta}) 
+ N_{\alpha,\beta}d^c\omega(E_{\alpha+\beta},E_{-\alpha},E_{-\beta}) 
- N_{\alpha,-\beta}d^c\omega(E_{\alpha-\beta},E_{-\alpha},E_\beta) \\
& - N_{-\alpha,\beta}d^c\omega(E_{-\alpha+\beta},E_{\alpha},E_{-\beta})
+ N_{-\alpha,-\beta}d^c\omega(E_{-\alpha-\beta},E_\alpha,E_\beta)
-d^c\omega(H_\beta,E_\alpha,E_{-\alpha}) \\ 
=& -g_\tg(H_\alpha,H_\beta) 
+ N_{\alpha,\beta}\im\epsilon_{\alpha+\beta}\epsilon_{-\alpha}\epsilon_{-\beta} N_{\alpha+\beta,-\alpha}(y_{\alpha+\beta}+y_{-\alpha}+y_{-\beta}) \\
& - N_{\alpha,-\beta}\im\epsilon_{\alpha-\beta}\epsilon_{-\alpha}\epsilon_\beta N_{\alpha-\beta,-\alpha}(y_{\alpha-\beta}+y_{-\alpha}+y_\beta)\\ 
&- N_{-\alpha,\beta}\im\epsilon_{-\alpha+\beta}\epsilon_{\alpha}\epsilon_{-\beta} N_{-\alpha+\beta,\alpha}(y_{-\alpha+\beta}+y_{\alpha}+y_{-\beta})\\
&+ N_{-\alpha,-\beta}\im\epsilon_{-\alpha-\beta}\epsilon_\alpha\epsilon_\beta N_{-\alpha-\beta,\alpha}(y_{-\alpha-\beta}+y_\alpha+y_\beta)
-g_\tg(H_\beta,H_\alpha), \\
=& -2g_\tg(H_\alpha,H_\beta) 
+ 2N_{\alpha,\beta}\im N_{\alpha+\beta,-\alpha}(y_{\alpha+\beta}+y_{-\alpha}+y_{-\beta}) \\
& + 2N_{\alpha,-\beta}\im\epsilon_{\alpha-\beta} N_{\alpha-\beta,-\alpha}(y_{\alpha-\beta}+y_{-\alpha}+y_\beta)\\
=& -2g_\tg(H_\alpha,H_\beta) 
- 2N_{\alpha,\beta}^2 (x_{\alpha+\beta}-x_{\alpha}-x_{\beta}) 
- 2N_{\alpha,-\beta}^2\epsilon_{\alpha-\beta} (\epsilon_{\alpha-\beta}x_{\alpha-\beta}-x_{\alpha}+x_\beta),
\end{align*}
concluding the proof. 
\end{proof}

Recall that the set $\{ \im H_\alpha:\alpha\in\Delta^+\}$ generates $\tg$.  

\begin{corollary}\label{SKT}
A left and $\Ad(T)$-invariant Hermitian structure $(J,g)$ on a compact semisimple Lie group $G$ is pluriclosed if and only if for all $\alpha,\beta\in\Delta^+$, $\alpha\ne\beta$, 
\begin{align}
g_\tg(\im H_\alpha,\im H_\beta)= 
N_{\alpha,\beta}^2(x_{\alpha+\beta}-x_{\alpha}-x_{\beta}) 
+\epsilon_{\alpha-\beta}N_{\alpha,-\beta}^2(\epsilon_{\alpha-\beta}x_{\alpha-\beta}-x_{\alpha}+x_\beta), \label{skt1} 
\end{align}
and for all $\alpha,\beta\in\Delta^+$, $\gamma,\delta\in\Delta^-$ such that $\alpha+\beta+\gamma+\delta=0$ and all pairs add nonzero,
\begin{align}
0 =& N_{\alpha,\beta} N_{\gamma,\delta} (x_\alpha+x_\beta+x_\gamma+x_\delta-2x_{\alpha+\beta}) \notag \\ 
&- \epsilon_{\alpha+\gamma}N_{\alpha,\gamma} N_{\beta,\delta}(-x_\alpha+x_\beta+x_\gamma-x_\delta+2\epsilon_{\alpha+\gamma}x_{\alpha+\gamma}) \label{skt2}\\
&+ \epsilon_{\alpha+\delta}N_{\alpha,\delta}  N_{\beta,\gamma} (-x_\alpha+x_\beta-x_\gamma+x_\delta+2\epsilon_{\alpha+\delta}x_{\alpha+\delta}). \notag
\end{align}
\end{corollary}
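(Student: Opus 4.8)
The plan is to read off both equivalences directly from Lemma \ref{ddco}, since $(J,g)$ is pluriclosed precisely when the $4$-form $dd^c\omega$ vanishes. First I would note that $dd^c\omega$ is determined by its values on $4$-tuples drawn from the basis of $\ggo^c$ consisting of a basis of $\tg^c$ together with $\{E_\alpha:\alpha\in\Delta\}$, so $dd^c\omega=0$ if and only if every such component vanishes. By Lemma \ref{ddco} all of these components are automatically zero except for the two displayed families, namely those of the form $dd^c\omega(E_\alpha,E_{-\alpha},E_\beta,E_{-\beta})$ with $\alpha\ne\beta$ in $\Delta^+$, and those of the form $dd^c\omega(E_\alpha,E_\beta,E_\gamma,E_\delta)$ with $\alpha+\beta+\gamma+\delta=0$ and all pairs adding nonzero. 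Hence pluriclosedness is equivalent to the simultaneous vanishing of these two families, and it remains only to rewrite each vanishing condition in the stated form.

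For the first family I would take the formula for $dd^c\omega(E_\alpha,E_{-\alpha},E_\beta,E_{-\beta})$ from Lemma \ref{ddco}, set it to zero, and divide by $-2$, obtaining $g_\tg(H_\alpha,H_\beta)=-N_{\alpha,\beta}^2(x_{\alpha+\beta}-x_\alpha-x_\beta)-\epsilon_{\alpha-\beta}N_{\alpha,-\beta}^2(\epsilon_{\alpha-\beta}x_{\alpha-\beta}-x_\alpha+x_\beta)$. The only genuine manipulation is to convert the left-hand side: since $g_\tg$ is extended $\CC$-bilinearly, $g_\tg(\im H_\alpha,\im H_\beta)=\im^2\,g_\tg(H_\alpha,H_\beta)=-g_\tg(H_\alpha,H_\beta)$, which turns the previous identity into exactly \eqref{skt1}. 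I would also recall, as noted just before the statement, that $\{\im H_\alpha:\alpha\in\Delta^+\}$ generates $\tg$, so this family of components really does determine all of $g_\tg$ and \eqref{skt1} is a faithful reformulation of the $\tg$-part of the pluriclosed condition.

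For the second family the reformulation is immediate: setting $dd^c\omega(E_\alpha,E_\beta,E_\gamma,E_\delta)=0$ for $\alpha,\beta\in\Delta^+$, $\gamma,\delta\in\Delta^-$ with $\alpha+\beta+\gamma+\delta=0$ and all pairs adding nonzero reproduces \eqref{skt2} verbatim. Here I would flag that the complementary index configuration $\alpha,\beta,\gamma\in\Delta^+$, $\delta\in\Delta^-$ was already shown in Lemma \ref{ddco} to vanish identically (equivalently, because $dd^c\omega=-\partial\overline{\partial}\omega$ is of type $(2,2)$), so it imposes no extra constraint and is correctly absent from the list.

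The main obstacle is essentially nil, as all the computational weight sits in Lemma \ref{ddco}; the only points demanding care are the bookkeeping of the factor $-2$ and the sign flip coming from $\CC$-bilinearity in the first family, together with the verification that the components not appearing in Lemma \ref{ddco} truly vanish, so that no hidden equation is being dropped.
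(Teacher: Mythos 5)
Your proposal is correct and follows exactly the route the paper intends: the corollary is read off from Lemma \ref{ddco} by setting the two families of (only possibly nonzero) components of $dd^c\omega$ to zero, with the first family rewritten via the factor $-2$ and the sign flip $g_\tg(\im H_\alpha,\im H_\beta)=-g_\tg(H_\alpha,H_\beta)$ coming from $\CC$-bilinear extension. Your side remarks (the identically vanishing configuration $\alpha,\beta,\gamma\in\Delta^+$, $\delta\in\Delta^-$, equivalently the $(2,2)$-type of $dd^c\omega$, and the fact that $\{\im H_\alpha:\alpha\in\Delta^+\}$ generates $\tg$) match the paper's own observations.
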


\begin{remark}
This characterization coincides with one obtained in the case of a non-compact simple Lie group of inner type in \cite[(4.2)]{GstPds}.  
\end{remark}

As a first application of the above characterization of pluriclosed metrics, actually of only condition \eqref{skt1}, we provide an alternative proof for \cite[Theorem 3.1]{FinGrn1}.  

\begin{proposition}\label{SKT3}\cite{FinGrn1}
For any left and $\Ad(T)$-invariant pluriclosed Hermitian structure $(J,g)$ on a compact semisimple Lie group $G$, one has that  $g_\tg=g_b|_\tg$ for some bi-invariant metric $g_b$.
\end{proposition}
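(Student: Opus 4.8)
The plan is to work with the symmetric bilinear form on $\tg$ given by $\phi(\alpha,\beta):=g_\tg(\im H_\alpha,\im H_\beta)$. Since $\{\im H_\alpha:\alpha\in\Delta^+\}$ generates $\tg$ and, in the conventions of \S\ref{roots}, the assignment $\alpha\mapsto \im H_\alpha$ is additive (so $\im H_{\alpha+\beta}=\im H_\alpha+\im H_\beta$ whenever $\alpha+\beta\in\Delta$), the form $\phi$ is bi-additive, $\phi(\alpha+\beta,\gamma)=\phi(\alpha,\gamma)+\phi(\beta,\gamma)$, and hence completely determined by its values on the simple roots $\Pi=\{\alpha_1,\dots,\alpha_n\}$. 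Recalling that on each simple factor a bi-invariant metric is a positive multiple of $\gk=-\kil_\ggo$, and that $\gk(\im H_\alpha,\im H_\beta)=c\langle\alpha,\beta\rangle$ with $c>0$, the assertion $g_\tg=g_b|_\tg$ is equivalent to $\phi(\tg_i,\tg_j)=0$ for $i\ne j$ together with $\phi(\alpha,\beta)=w_i\langle\alpha,\beta\rangle$ for $\alpha,\beta$ in the same factor $i$, for some constants $w_i$. I will use only the pluriclosed condition \eqref{skt1}.

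First I would dispose of the off-diagonal blocks. If $\alpha\in\Delta_i^+$ and $\beta\in\Delta_j^+$ with $i\ne j$, then neither $\alpha+\beta$ nor $\alpha-\beta$ is a root, so $N_{\alpha,\beta}=N_{\alpha,-\beta}=0$ and \eqref{skt1} forces $\phi(\alpha,\beta)=0$; as the $\im H_\alpha$ with $\alpha\in\Delta_i^+$ span $\tg_i$, this yields $g_\tg(\tg_i,\tg_j)=0$ and reduces the problem to $G$ simple.

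For simple $G$ the heart is to prove $\phi(\alpha_i,\alpha_j)=w\langle\alpha_i,\alpha_j\rangle$ for a single constant $w$. For non-adjacent simple roots $\langle\alpha_i,\alpha_j\rangle=0$ and $\alpha_i\pm\alpha_j\notin\Delta$, so \eqref{skt1} gives $\phi(\alpha_i,\alpha_j)=0$. For an adjacent pair $\alpha:=\alpha_i$, $\beta:=\alpha_j$ one has $\alpha-\beta\notin\Delta$, hence $\phi(\alpha,\beta)=N_{\alpha,\beta}^2(x_{\alpha+\beta}-x_\alpha-x_\beta)$, and the diagonal entry is recovered via additivity from $\phi(\alpha,\alpha)=\phi(\alpha+\beta,\alpha)-\phi(\alpha,\beta)$. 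Expanding $\phi(\alpha+\beta,\alpha)$ with \eqref{skt1}, using the standard identity $N_{\alpha+\beta,-\alpha}^2=N_{\alpha,\beta}^2$ and the root-string identity (which for $\alpha-\beta\notin\Delta$ and $2\alpha+\beta\notin\Delta$ reads $2\langle\alpha,\beta\rangle=-\langle\alpha,\alpha\rangle$), gives $\phi(\alpha,\alpha)=-2\phi(\alpha,\beta)$, so that $\phi(\alpha,\alpha)/\langle\alpha,\alpha\rangle=\phi(\alpha,\beta)/\langle\alpha,\beta\rangle=:w_{ij}$. This produces $\phi=w_{ij}\langle\cdot,\cdot\rangle$ on the span of $\alpha_i,\alpha_j$; since the diagonal entry $\phi(\alpha_i,\alpha_i)$ is shared by every edge at the vertex $i$ and the Dynkin diagram of a simple group is connected, all $w_{ij}$ coincide with one $w$. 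Finally $g_\tg>0$ forces $w>0$, so $g_b:=\tfrac{w}{c}\,\gk$ (assembled factor by factor in the general case) is the desired bi-invariant metric.

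The main obstacle will be the diagonal entry of the \emph{shorter} root on a multiply-laced edge, i.e. the pairs of type $B_2$ and $G_2$. Taking $\alpha$ to be the longer root the string identity applies and proportionality is immediate; but for $\alpha$ the shorter root one has $2\alpha+\beta\in\Delta$ (and in $G_2$ also $3\alpha+\beta\in\Delta$), the string identity above fails, and the expansion of $\phi(\alpha+\beta,\alpha)$ carries extra terms involving $x_{2\alpha+\beta}$ (and $x_{3\alpha+\beta}$) that do not cancel from a single use of \eqref{skt1}. The remedy is to feed in the further instances of \eqref{skt1} available inside the rank-two subsystem, such as the pairs $(\alpha+\beta,\beta)$, $(\alpha+2\beta,\beta)$ and the orthogonal pair sitting inside $B_2$, and to use the corresponding identities among the $N_{\alpha,\beta}^2$ along the longer root strings to verify that the excess cancels and proportionality is restored. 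This finite, type-by-type bookkeeping of the structure constants in the long strings is where the real work lies; everything else follows formally from additivity of $\phi$ and the vanishings supplied by \eqref{skt1}.
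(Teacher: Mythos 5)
Your strategy is the same as the paper's: exploit \eqref{skt1} together with the additivity $\im H_{\alpha+\beta}=\im H_\alpha+\im H_\beta$, reduce to the rank-two subsystems spanned by adjacent simple roots, and propagate the resulting proportionality along the connected Dynkin diagram. Your treatment of the off-diagonal blocks, of non-adjacent pairs, and of simply-laced edges is complete and correct (it is the paper's case (a) in different notation, since there $\phi(\alpha,\alpha)=-2\phi(\alpha,\beta)=\phi(\beta,\beta)$ pins down everything). But there is a genuine gap exactly where you say ``the real work lies'': for a $B_2$ or $G_2$ edge you never verify that the extra unknowns $x_{2\alpha+\beta}$ (and, for $G_2$, $x_{3\alpha+\beta}$, $x_{3\alpha+2\beta}$) actually drop out of the system and force proportionality; you only assert that a ``finite, type-by-type bookkeeping'' will make the excess cancel. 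Since the multiply-laced edges are the crux of the proposition --- the simply-laced computation is a few lines --- the argument as written is incomplete at its decisive point, even though nothing in it is wrong.

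The gap is fillable, and your sketch (more instances of \eqref{skt1} inside the rank-two subsystem, including its strongly orthogonal pairs) is precisely how the paper closes it. In the paper's labels ($\alpha$ short, $\beta$ long): for $B_2$ one uses \eqref{skt1} for the pairs $(\alpha,\beta)$, $(\alpha+\beta,\alpha)$, $(2\alpha+\beta,\alpha+\beta)$ together with the strongly orthogonal pair $(2\alpha+\beta,\beta)$, whose right-hand side vanishes and gives $2g_\tg(\alpha,\beta)+g_\tg(\beta,\beta)=0$ by additivity; summing the remaining equations makes all the $x$'s cancel and yields $g_\tg(\alpha,\beta)=\la\alpha,\beta\ra$, $g_\tg(\beta,\beta)=\la\beta,\beta\ra$. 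For $G_2$ the resolution is cleaner than you anticipate and needs no cancellation of $x$'s at all: the two strongly orthogonal pairs $(2\alpha+\beta,\beta)$ and $(3\alpha+\beta,\alpha+\beta)$ give, via additivity, the $x$-free linear relations $2g_\tg(\alpha,\beta)+g_\tg(\beta,\beta)=0$ and $3g_\tg(\alpha,\alpha)+4g_\tg(\alpha,\beta)+g_\tg(\beta,\beta)=0$, which already force $(g_\tg(\alpha,\alpha),g_\tg(\alpha,\beta),g_\tg(\beta,\beta))$ to be proportional to $(\la\alpha,\alpha\ra,\la\alpha,\beta\ra,\la\beta,\beta\ra)$. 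Writing out these finitely many equations is what is needed to turn your outline into a proof.
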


\begin{proof}
Recall that $\{\im H_\alpha:\alpha\in\Pi\}$ is a basis of $\tg$, where $\Pi\subset\Delta^+$ is the set of simple roots.  We set 
$$
\la\alpha,\beta\ra:=\gk(\im H_\alpha,\im H_\beta) =\kil(H_\alpha,H_\beta), \qquad 
g_\tg(\alpha,\beta):=g_\tg(\im H_\alpha,\im H_\beta), \quad \forall \alpha,\beta\in\Delta. 
$$  
For any $\alpha,\beta\in\Pi$ such that $\la\alpha,\beta\ra=0$, we obtain from \eqref{skt1} that $g_\tg(\alpha,\beta)=0=\la\alpha,\beta\ra$ by using that $\alpha\pm\beta\notin\Delta$.  In particular, the maximal tori of two different simple factors are $g_\tg$-orthogonal, so we can assume that $G$ is simple and show that $g_\tg=\gk|_\tg$ up to scaling.  

On the other hand, if $\alpha,\beta\in\Pi$ satisfy that $\la\alpha,\beta\ra<0$, then there are exactly three possibilities (up to scaling we can assume that $g_\tg(\alpha,\alpha)=\la\alpha,\alpha\ra$): 
\begin{enumerate}[{\rm (a)}]
\item $\dynkin[scale=2, labels={\alpha,\beta}] A{oo}\;$: $\;\la\alpha,\alpha\ra=\la\beta,\beta\ra=-2\la\alpha,\beta\ra$, $N_{\alpha,\beta}^2=\unm\la\alpha,\alpha\ra=N_{\alpha+\beta,\alpha}^2=N_{\alpha+\beta,\beta}^2$.  It follows from \eqref{skt1} that 
\begin{align*}
g_\tg(\alpha,\beta)=&\unm\la\alpha,\alpha\ra(x_{\alpha+\beta}-x_\alpha-x_\beta), \\
g_\tg(\alpha+\beta,\alpha)=&\unm\la\alpha,\alpha\ra(x_\beta-x_{\alpha+\beta}+x_\alpha), \\
g_\tg(\alpha+\beta,\beta)=&\unm\la\alpha,\alpha\ra(x_\alpha-x_{\alpha+\beta}+x_\beta),   
\end{align*}
which implies that $g_\tg(\alpha,\beta)=\la\alpha,\beta\ra$, $g_\tg(\beta,\beta)=\la\beta,\beta\ra$.  

\item $\dynkin[scale=2, labels={\alpha,\beta}] C{oo}\;$: $\;\la\alpha,\alpha\ra=\unm\la\beta,\beta\ra=-\la\alpha,\beta\ra$, $N_{\alpha,\beta}^2=\la\alpha,\alpha\ra=N_{\alpha+\beta,\alpha}^2=N_{2\alpha+\beta,\alpha+\beta}^2$.  By \eqref{skt1} we have that $g_\tg(2\alpha+\beta,\beta)=0$, so $g_\tg(\alpha,\beta)=-\unm g_\tg(\beta, \beta)$, and in addition,
\begin{align*}
g_\tg(\alpha,\beta)=&\la\alpha,\alpha\ra(x_{\alpha+\beta}-x_\alpha-x_\beta), \\
g_\tg(\alpha+\beta,\alpha)=&\la\alpha,\alpha\ra(x_{2\alpha+\beta}-x_{\alpha+\beta}-x_\alpha) 
+\la\alpha,\alpha\ra(x_\beta-x_{\alpha+\beta}+x_\alpha), \\
g_\tg(2\alpha+\beta,\alpha+\beta)=&\la\alpha,\alpha\ra(x_\alpha-x_{2\alpha+\beta}+x_{\alpha+\beta}).    
\end{align*}
It is easy to see that the only solutions are $g_\tg(\alpha,\beta)=\la\alpha,\beta\ra$, $g_\tg(\beta,\beta)=\la\beta,\beta\ra$. 

\item $\dynkin[scale=2, labels={\beta, \alpha}, backwards=true] G{oo}\;$: $\;\la\alpha,\alpha\ra=\tfrac{1}{3}\la\beta,\beta\ra=-\tfrac{2}{3}\la\alpha,\beta\ra$, $N_{\alpha,\beta}^2=\tfrac{3}{2}\la\alpha,\alpha\ra =N_{2\alpha+\beta,\alpha}^2 =N_{3\alpha+\beta,\beta}^2$, $N_{\alpha+\beta,\alpha}^2=2\la\alpha,\alpha\ra$.  Here condition \eqref{skt1} gives that $g_\tg(2\alpha+\beta,\beta)=0$ and $g_\tg(3\alpha+\beta,\alpha+\beta)=0$, from which clearly follows that $g_\tg(\alpha,\beta)=\la\alpha,\beta\ra$ and $g_\tg(\beta,\beta)=\la\beta,\beta\ra$.  
\end{enumerate}
All this allows us to proceed as follows.  We start at any end $\alpha_1$ of the Dynkin diagram and assume up to scaling that $g_\tg(\alpha_1,\alpha_1)=\la\alpha_1,\alpha_1\ra$.  Then, by using (a), (b) and (c) above, we run through the diagram obtaining in each step that 
$g_\tg|_{\spann\{\alpha_i,\alpha_{i+1}\}}=\ip|_{\spann\{\alpha_{i},\alpha_{i+1}\}}$ and the same for the next two nodes in any bifurcation.  Thus $g_\tg(\im H_\alpha,\im H_\beta)=\gk(\im H_\alpha,\im H_\beta)$ for any $\alpha,\beta\in\Pi$, that is, $g_\tg=\gk|_\tg$, as was to be shown.  
\end{proof}

We are now ready to prove the main result of this paper, a complete classification of left and $\Ad(T)$-invariant pluriclosed metrics on $G$.  If $G=G_1\times\dots\times G_s$, where $G_i$ is simple for all $i$, then for each $1\leq i\leq s$ we consider the decomposition $\ggo_i=\tg_i\oplus\qg_i$ such that $\tg=\tg_1\oplus\dots\oplus\tg_s$ and $\qg=\qg_1\oplus\dots\oplus\qg_s$, so each $\tg_i$ is a maximal abelian subalgebra of $\ggo_i$ and defines a root system $\Delta_i$ with $\Delta=\Delta_1\sqcup\dots\sqcup\Delta_s$.  

\begin{theorem}\label{SKT2}
On any compact semisimple Lie group $G$, the following conditions on a left and $\Ad(T)$-invariant Hermitian structure $(J,g)$ are equivalent: 
\begin{enumerate}[{\rm (i)}] 
\item The Hermitian manifold $(G,J,g)$, $g=g_\tg+g_\qg$ is pluriclosed. 

\item $g_\tg(\tg_i,\tg_j)=0$ for all $i\ne j$ and on each simple factor, up to scaling, $g_\tg=\gk|_\tg$ and $x_{\alpha+\beta}=x_\alpha+x_\beta-1$ for all $\alpha,\beta,\alpha+\beta\in\Delta^+$, where $g_\qg=(x_\alpha)_{\alpha\in\Delta^+}$.  

\item $g_\tg(\tg_i,\tg_j)=0$ for all $i\ne j$ and for each $1\leq i\leq s$, there exists $z_i>0$ such that $g_\tg|_{\tg_i}=z_i\gk|_{\tg_i}$ and $g_\qg|_{\qg_i}=(z_ix_\alpha)_{\alpha\in\Delta^+_i}$, where the $x_\alpha$ are defined as follows: if $\Pi_i=\{\alpha_1,\dots,\alpha_{n_i}\}$ are the simple roots of $\ggo_i$ and we set $x_j:=x_{\alpha_j}$ for $j=1,\dots,n_i$, then 
$$
x_\alpha:=\sum_{j=1}^{n_i} k^\alpha_jx_j -\left(\sum_{j=1}^{n_i} k^\alpha_j -1\right), \qquad \forall \alpha=\sum_{j=1}^{n_i} k^\alpha_j\alpha_j\in\Delta^+_i, \quad k^\alpha_j\in\ZZ_{\geq 0}.  
$$  
\end{enumerate}
\end{theorem}

\begin{remark}
Given any of these metrics $g$, the space of all left-invariant complex structures which are compatible with $g$ depends on $d(d-1)$ parameters, where $2d=\rank(G)$.  On the other hand, it follows from the theorem that for any fixed irreducible $J$, the space of all pluriclosed metrics compatible with $J$, if non-empty, depends on $2d+1$ parameters and contains a unique bi-invariant metric up to scaling, given by $g_b=z_1(-\kil_{\ggo_1})+\dots+z_s(-\kil_{\ggo_s})$.  Note that $g_\tg=g_b|_\tg$ for any metric $g$ as in the theorem.  In general, the number of parameters is $r(2d+1)$ and $J$ admits an $r$-parametric space of Hermitian bi-invariant metrics, where $r$ is the number of irreducible factors of $(G,J)$ as a complex manifold.  
\end{remark}

\begin{remark}\label{xpos}
The natural number $h(\alpha):=\sum\limits_{i=1}^n k^\alpha_i$ is called the {\it height} of the root $\alpha=\sum\limits_{i=1}^n k^\alpha_i\alpha_i$, where $\Pi=\{\alpha_1,\dots,\alpha_n\}$.  In order to obtain a metric it is of course necessary that $x_\alpha>0$ for all $\alpha\in\Delta^+$, which for instance holds if $x_i\geq 1-\tfrac{1}{h(\alpha_{max})}$ for all $i=1,\dots,n$, where $\alpha_{max}$ is the maximal root.  
\end{remark}

\begin{proof}
We first prove that (i) implies (ii) 
%Note that in the proof of Proposition \ref{SKT3}, we also obtain assuming that $g_\tg=\gk|_\tg$ the following: $x_{\alpha+\beta}=x_\alpha+x_\beta-1$ in cases (a) and (b), and $x_{2\alpha+\beta}=2x_\alpha+x_\beta-2$ in case (b).  In case (c), condition \eqref{skt1} gives the following equations when applied to the pairs $(\alpha,\beta)$, $(\alpha+\beta,\alpha)$, $(2\alpha+\beta,\alpha)$ and $(3\alpha+\beta,\beta)$, respectively:
%\begin{align*}
%-1=&x_{\alpha+\beta}-x_\alpha-x_\beta, \\
%-1=&4x_{2\alpha+\beta}-7x_{\alpha+\beta}+3x_\beta-x_\alpha, \\  
%1=&5x_{3\alpha+\beta}-11x_{2\alpha+\beta}+6x_{\alpha+\beta}+x_\alpha, \\ 
%-1=& x_{3\alpha+2\beta}-x_{3\alpha+\beta}-x_\beta.
%\end{align*}
%This implies that $x_{\alpha+\beta}=x_\alpha+x_\beta-1$, $x_{2\alpha+\beta}=2x_\alpha+x_\beta-2$, $x_{3\alpha+\beta}=3x_\alpha+x_\beta-3$ and $x_{3\alpha+2\beta}=3x_\alpha+2x_\beta-4$.  Summarizing, the formula in part (ii) holds for any two positive roots in $\spann\{\alpha,\beta\}$, where $\alpha,\beta\in\Pi$.  
by induction in the height of the roots. 
%A straightforward inspection of the nine types of Dynkin diagrams gives that for any $\gamma=\sum\limits_{i=1}^{2d}k_i\alpha_i\in\Delta^+$ such that at least three different integers $k_i$'s are $>0$, there exists $\alpha,\beta\in\Delta^+$ such that $\gamma=\alpha+\beta$ and $\alpha-\beta\notin\Delta$.  Thus $N_{\alpha,\beta}^2=-\la\alpha,\beta\ra$ and so by \eqref{skt1},  
%%$$
%%\la\alpha,\beta\ra = g_\tg(\alpha,\beta) =-\la\alpha,\beta\ra(x_{\alpha+\beta}-x_\alpha-x_\beta), 
%%$$
%$x_\gamma=x_{\alpha+\beta}=x_\alpha+x_\beta-1$.  If $\alpha=\sum\limits_{i=1}^{2d}l_i\alpha_i$ and $\beta=\sum\limits_{i=1}^{2d}m_i\alpha_i$ (so $k_i=l_i+m_i$), then by induction hypothesis, 
%$$
%x_\gamma=\sum_{i=1}^{2d}l_ix_i -(h(\alpha)-1) + \sum_{i=1}^{2d}m_ix_i -(h(\beta)-1) - 1 
%%=\sum_{i=1}^{2d}(l_i+m_i)x_i -(\sum_{i=1}^{2d}(l_i+m_i)-1) 
%=\sum_{i=1}^{2d}k_ix_i -(h(\gamma)-1), 
%$$
%as was to be shown.  
On any of the simple factors of $G$, we can assume by Proposition \ref{SKT3} that $g_\tg(\im H_\alpha,\im H_\beta)=\la\alpha,\beta\ra$ for all $\alpha,\beta\in\Delta$, so it follows from \eqref{skt1} that   
\begin{align*}
\la\alpha,\beta\ra= 
N_{\alpha,\beta}^2(x_{\alpha+\beta}-x_{\alpha}-x_{\beta}) 
+\epsilon_{\alpha-\beta}N_{\alpha,-\beta}^2(\epsilon_{\alpha-\beta}x_{\alpha-\beta}-x_{\alpha}+x_\beta), 
\end{align*}
for all $\alpha,\beta\in\Delta^+$, $\alpha\ne\beta$.  If $\epsilon_{\alpha-\beta}=0$, then $N_{\alpha,\beta}^2=-\la\alpha,\beta\ra$ and we are done.  Otherwise, we can assume that $\epsilon_{\alpha-\beta}=1$, i.e., $\alpha-\beta\in\Delta^+$, and since $x_{\alpha}=x_{\alpha-\beta}+x_\beta-1$ by induction hypothesis, we obtain that (see \S\ref{roots})  
\begin{align*}
\la\alpha,\beta\ra= 
N_{\alpha,\beta}^2(x_{\alpha+\beta}-x_{\alpha}-x_{\beta}) 
+N_{\alpha,\beta}^2+\la\alpha,\beta\ra, 
\end{align*}
so $x_{\alpha+\beta}-x_{\alpha}-x_{\beta}=-1$.  

On the other hand, it is easy to check that part (iii) is equivalent to part (ii).  

Finally, if part (ii) holds, then we also have that $x_{\alpha-\beta}-1=\pm(x_\alpha-x_\beta)$ if $\alpha-\beta\in\pm\Delta^+$ for all $\alpha,\beta\in\Delta^+$.  Let us prove part (i) by using Corollary \ref{SKT}.  We can assume that $G$ is simple and that $g_\tg=\gk|_\tg$, i.e., $g_\tg(\im H_\alpha,\im H_\beta)=\la\alpha,\beta\ra$ for all $\alpha,\beta\in\Delta^+$.  By using that $N_{\alpha,-\beta}^2 =N_{\alpha,\beta}^2+\la\alpha,\beta\ra$, we obtain that condition \eqref{skt1} for $\alpha,\beta\in\Delta^+$, $\alpha\ne\beta$ is equivalent to 
\begin{align*}
\la\alpha,\beta\ra=& 
N_{\alpha,\beta}^2(x_{\alpha+\beta}-x_{\alpha}-x_{\beta} +x_{\alpha-\beta}+\epsilon_{\alpha-\beta}(-x_{\alpha}+x_\beta)) 
+\la\alpha,\beta\ra (x_{\alpha-\beta}+\epsilon_{\alpha-\beta}(-x_{\alpha}+x_\beta)) \\ 
=&N_{\alpha,\beta}^2(-1 +x_{\alpha-\beta} - (x_{\alpha-\beta}-1)) 
+\la\alpha,\beta\ra (x_{\alpha-\beta}- (x_{\alpha-\beta}-1))= \la\alpha,\beta\ra, 
\end{align*}
so it holds.  Concerning condition \eqref{skt2}, we have that 
\begin{align*}
x_\alpha+x_\beta+x_\gamma+x_\delta-2x_{\alpha+\beta} 
= x_{\alpha+\beta}+1 +x_{\gamma+\delta}+1 -2x_{\alpha+\beta} =2. 
\end{align*}
If $\alpha+\gamma\in\Delta^+$, then by using that $\alpha=(\alpha+\gamma)+(-\gamma)$ and $-\delta=\beta+(-(\beta+\delta))$, we obtain
\begin{align*}
\epsilon_{\alpha+\gamma}(-x_\alpha+x_\beta+x_\gamma-x_\delta+2\epsilon_{\alpha+\gamma}x_{\alpha+\gamma}) 
%=& -x_\alpha+x_{\gamma}-x_{\delta}+x_\beta+2x_{\alpha+\gamma} \\ 
= -x_{\alpha+\gamma}+1-x_{\delta+\beta}+1+2x_{\alpha+\gamma} = 2.
\end{align*}
Analogously, it is equal to $-2$ whenever $\alpha+\gamma\in\Delta^-$.  On the other hand, if $\alpha+\delta\in\Delta^+$, then 
\begin{align*}
\epsilon_{\alpha+\delta}(-x_\alpha+x_\beta-x_\gamma+x_\delta+2\epsilon_{\alpha+\delta}x_{\alpha+\delta}) 
%=& -x_\alpha+x_\beta-x_\gamma+x_\delta+2x_{\alpha+\delta} \\ 
%=& -x_\alpha+x_{\delta}+x_\beta-x_{\gamma}+2x_{\alpha+\delta} \\
= -x_{\alpha+\delta}+1-x_{\beta+\gamma}+1+2x_{\alpha+\delta} = 2,  
\end{align*}
and it is equal to $-2$ if $\alpha+\delta\in\Delta^-$.  This implies that \eqref{skt2} is equivalent to 
\begin{align*}
N_{\alpha,\beta} N_{\gamma,\delta}-N_{\alpha,\gamma} N_{\beta,\delta}+N_{\alpha,\delta}  N_{\beta,\gamma}=0,
\end{align*}
which is well known to hold (see \S\ref{roots}).  Part (i) therefore holds, concluding the proof.  
\end{proof}

%\begin{example}\label{An-SKT} 
%For $M=\SU(n+1)$, the metrics given in Theorem \ref{SKT2}, (ii) are given by $g_\tg=\gk|_\tg$ and 
%$$
%x_{\alpha_{i,i+1}}=x_i, \quad \forall i=1,\dots,n, \qquad 
%x_{\alpha_{ij}}=x_i+\dots+x_{j-1}+i-j+1, \quad\forall i<j, \quad j-i\geq 2.
%$$
%\end{example}

\begin{remark}
The above proof shows that condition \eqref{skt2} actually follows from \eqref{skt1} in the characterization of pluriclosed given in Corollary \ref{SKT}.  
\end{remark}

\section{CYT metrics}\label{CYT-sec}

On any Hermitian manifold $(M^{2n},J,g,\omega)$, there is a line $\nabla^t$, $t\in\RR$ of {\it canonical} connections which are Hermitian (i.e., $\nabla^tJ=0$ and $\nabla^tg=0$) and with torsion first studied by Gauduchon in \cite{Gdc} and defined by
$$
g(\nabla^t_XY,Z)=g(\nabla^g_XY,Z)+\tfrac{t-1}{4}d^c\omega(X,Y,Z)+\tfrac{t+1}{4}d^c\omega(X,JY,JZ),
$$
where $\nabla^g$ is the Levi-Civita connection of $(M,g)$.  Their corresponding (first) Ricci forms, given by 
$$
\rho^t(X,Y):=-\unm\sum_{i=1}^{2n} g(R^t(X,Y)e_i,Je_i), 
$$
where $\{ e_i\}_1^{2n}$ is a $g$-orthonormal frame and $R^t(X,Y):=[\nabla^t_X,\nabla^t_Y]-\nabla^t_{[X,Y]}$ is the curvature of $\nabla^t$, are all closed $2$-forms representing the first Chern class $c_1(M,J)$ up to scaling. 

We are interested in this paper in the {\it Chern} connection $\nabla^C:=\nabla^1$ and the {\it Bismut} (or {\it Strominger}) connection $\nabla^B:=\nabla^{-1}$, whose Ricci forms are related by (see \cite[(2.10)]{IvnPpd}) 
%On a given Hermitian manifold $(M,J,g,\omega)$, the {\it Chern} and {\it Bismut} (or Strominger) connections are the Hermitian connections (i.e., $\nabla J=0$ and $\nabla g=0$) respectively defined by 
%$$
%g(\nabla^C_XY,Z)=g(\nabla^g_XY,Z)+\tfrac{1}{2}d^c\omega(X,JY,JZ), \quad
%g(\nabla^B_XY,Z)=g(\nabla^g_XY,Z)-\tfrac{1}{2}d^c\omega(X,Y,Z),
%$$
%where $\nabla^g$ is the Levi-Civita connection.  The corresponding Ricci forms are both closed $2$-forms representing (up to scaling) the first Chern class $c_1(M,J)$ and 
%satisfy that (see \cite[(2.10)]{IvnPpd} or \cite[Section 4]{FinGrn1})
\begin{equation}\label{rhos}
\rho^B=\rho^C-dd_g^*\omega.  
\end{equation}
The metric $g$ is called {\it Calabi-Yau with torsion} (CYT for short) if $\rho^B=0$, i.e., the Bismut Ricci form vanishes.  
  
Back in the setting of left-invariant Hermitian structures on Lie groups, according to \cite[(4.2)]{Vzz} (see also \cite[(2.2)]{CRF} and \cite[Section 5]{GstPds}), the Chern Ricci form is given by
$$
\rho^C(X,Y)=-\unm\tr{J\ad{[X,Y]}}, \qquad\forall X,Y\in\ggo.  
$$ 
This implies that on compact semisimple Lie groups,  
\begin{align*}
\rho^C(X,Y) =& -\unm \sum_{\alpha\in\Delta^+} 2\im\alpha([X,Y]_\tg)  
=- \sum_{\alpha\in\Delta^+} \im\kil([X,Y],H_\alpha) \\
=& - \sum_{\alpha\in\Delta^+} \kil_\ggo([X,Y],\im H_\alpha)
=-\kil_\ggo([\cdot,\cdot],Z_{J_\qg}), 
\end{align*}
where $Z_{J_\qg}$ is as in \eqref{koszul-g} (see also \cite[Section 4]{FinGrn1} and \cite{AlkPrl, Ksz}).  We therefore obtain from Lemma \ref{dto} and \eqref{rhos} that 
\begin{equation}\label{rhoB}
\rho^B= \kil_\ggo\left([\cdot,\cdot],-Z_{J_\qg} + P_\tg Z_{J_\qg,g_\qg}\right), \qquad
g_\tg=\gk(P_\tg\cdot,\cdot).  
\end{equation}
In particular, the metric $g=g_\tg+g_\qg$ on the complex manifold $(G,J)$ is CYT if and only if $P_\tg Z_{J_\qg,g_\qg}=Z_{J_\qg}$.   We are now ready to give, as an application of Theorem \ref{SKT2}, an alternative proof for \cite[Corollary 5.1, iii)]{FinGrn1}.  

\begin{proposition}\label{pluri-CYT}\cite{FinGrn1} 
A left and $\Ad(T)$-invariant Hermitian structure $(J,g)$ on a compact semisimple Lie group $G$ is pluriclosed and CYT if and only if $g$ is a bi-invariant metric (i.e., Bismut flat).
\end{proposition}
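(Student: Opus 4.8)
The plan is to reduce the whole statement, via Theorem \ref{SKT2}, to the affine functional $F$ from \eqref{F-intro} and exploit its strict convexity, so that the CYT condition becomes the vanishing of a gradient. The easy direction is immediate: a bi-invariant metric $g_b=z_1(-\kil_{\ggo_1})+\dots+z_s(-\kil_{\ggo_s})$ is Bismut flat, hence $\rho^B=0$ and $g_b$ is CYT, and it is pluriclosed by Theorem \ref{SKT2}(ii) (it corresponds to the normalized parameters $x_\alpha\equiv 1$ on each simple factor, which trivially satisfy $x_{\alpha+\beta}=x_\alpha+x_\beta-1$). So I only need the converse.

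For the main direction, assume $(J,g)$ is pluriclosed and CYT. By Theorem \ref{SKT2}(ii) the metric $g_\tg$ is block-diagonal with $g_\tg|_{\tg_i}=z_i\gk|_{\tg_i}$, so $P_\tg|_{\tg_i}=z_i\,\id$, while $g_\qg|_{\qg_i}$ carries the parameters $z_i x_\alpha$, with the $x_\alpha$ determined by the normalized simple-root values $x_j$. Substituting these into the definitions of $Z_{J_\qg}$ and $Z_{J_\qg,g_\qg}$ in \eqref{koszul-g}, the scalings $z_i$ cancel in $P_\tg Z_{J_\qg,g_\qg}$, so the CYT criterion $P_\tg Z_{J_\qg,g_\qg}=Z_{J_\qg}$ collapses, on each simple factor, to the single identity in $\tg_i$
$$
\sum_{\alpha\in\Delta^+_i}\left(\tfrac{1}{x_\alpha}-1\right)\im H_\alpha=0.
$$

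The key step is to pair this identity with the basis $\{\im H_{\alpha_j}:\alpha_j\in\Pi\}$ using $\gk=-\kil_\ggo$ and $\gk(\im H_\alpha,\im H_\beta)=\la\alpha,\beta\ra$, expanding $\alpha=\sum_i k^\alpha_i\alpha_i$. This rewrites the CYT condition as $Q v=0$, where $Q=[\la\alpha_i,\alpha_j\ra]$ is the Gram matrix of the simple roots and $v_i:=\sum_{\alpha}(\tfrac{1}{x_\alpha}-1)k^\alpha_i$. Since $\partial x_\alpha/\partial x_i=k^\alpha_i$, one recognizes $v=-\grad(F)$, and because $Q$ is positive definite (the $\im H_{\alpha_i}$ are $\gk$-linearly independent), it follows that $\grad(F)=0$. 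To finish, I observe that $F=\sum_\alpha(x_\alpha-\log x_\alpha)$ is a sum of convex functions of the affine, positive quantities $x_\alpha$, and the terms coming from the simple roots $x_{\alpha_j}=x_j$ already contribute a positive-definite Hessian with entries $1/x_j^2$; hence $F$ is strictly convex and has at most one critical point. As each summand $x_\alpha-\log x_\alpha$ is minimized at $x_\alpha=1$ and the choice $x_j=1$ for all $j$ realizes $x_\alpha=1$ simultaneously, that point is the unique critical point. Thus $\grad(F)=0$ forces $x_j=1$, whence $x_\alpha=1$ for all $\alpha$ and the full parameters $z_i x_\alpha=z_i$ are constant on each factor, i.e.\ $g$ is bi-invariant.

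The only genuinely delicate point is this key step: seeing that, after pairing with the coroot basis, the CYT equations are exactly $Q\,\grad(F)=0$ with $Q$ positive definite, so that CYT is equivalent to $\grad(F)=0$. Once this identification is made, the positive-definiteness of $Q$ and the strict convexity of $F$ do all the work, and the remainder is either the cited structure theorem or elementary calculus.
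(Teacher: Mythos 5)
Your proposal is correct and follows essentially the same route as the paper: Theorem \ref{SKT2} reduces the CYT condition to the vanishing of $\grad(F)$ (the paper's equation \eqref{cyt1}), and the strict convexity of $F$, guaranteed by the simple-root terms, then forces $x_1=\dots=x_n=1$. The only cosmetic differences are that the paper obtains \eqref{cyt1} by expanding directly in the basis $\{\im H_{\alpha_j}\}$ of $\tg$ instead of pairing with $\gk$ and invoking positive definiteness of $Q$, and it checks strict convexity along lines through $(1,\dots,1)$ rather than via the full Hessian.
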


\begin{proof}
Recall from Theorem \ref{SKT2} the form of any left and $\Ad(T)$-invariant pluriclosed metric $g$.  We can therefore assume that $G$ is simple and $P_\tg=I$, which implies that $g$ is CYT if and only if $Z_{J_\qg,g_\qg}=Z_{J_\qg}$ (see \eqref{rhoB}), that is, 
$$
\sum_{\alpha\in\Delta^+} \left(1-\tfrac{1}{x_\alpha}\right)\im H_\alpha = 0, \quad 
\mbox{where}\quad x_\alpha=1+\sum_{i=1}^n k^\alpha_i(x_i-1) \quad\mbox{if} \quad 
\alpha=\sum_{i=1}^n k^\alpha_i\alpha_i.  
$$
Using that $\{ \im H_{\alpha_1},\dots,\im H_{\alpha_n}\}$ is a basis of $\tg$, we obtain that this is equivalent to 
\begin{equation}\label{cyt1}
\sum_{\alpha\in\Delta^+} \left(1-\tfrac{1}{x_\alpha}\right)k^\alpha_j = 0, \qquad\forall j=1,\dots,n. 
\end{equation}
We need to show that this implies that $x_1=\dots=x_n=1$ in order to prove the proposition.  

For each fixed nonzero vector $(y_1,\dots,y_n)$, we consider the straight line 
$$
x_i=x_i(t):=1+ty_i, \qquad i=1,\dots,n, \quad \mbox{so} \quad 
x_\alpha=x_\alpha(t)=1+\sum_{i=1}^n k^\alpha_ity_i,  
$$
and the function $f(t):=\sum\limits_{\alpha\in\Delta^+} x_\alpha - \log{x_\alpha}$.  Thus $f'(0)=0$, 
$$
f'(t)=\sum\limits_{\alpha\in\Delta^+} x_\alpha'\left(1-\tfrac{1}{x_\alpha}\right) 
=\sum\limits_{i=1}^n y_i\sum\limits_{\alpha\in\Delta^+} k^\alpha_i\left(1-\tfrac{1}{x_\alpha}\right), \qquad 
f''(t)=\sum\limits_{\alpha\in\Delta^+} \tfrac{(x_\alpha')^2}{x_\alpha^2}.
$$
Since $f''(t)>0$ for all $t$ (note that $x_{\alpha_i}'(t)=y_i$ for all $i=1,\dots,n$), we have that $f'(t)=0$ if and only if $t=0$.  But if the metric $g(x_1,\dots,x_n)$ is CYT then $f'(t)=0$ by \eqref{cyt1}, concluding the proof. 
\end{proof}

\section{Pluriclosed flow}\label{PF-sec}

On a given complex manifold $(M,J)$, the {\it pluriclosed flow} for a one-parameter family of Hermitian metrics $\omega(t)=g(t)(J\cdot,\cdot)$ is defined by 
\begin{equation}\label{PF} 
\dpars\omega=-(\rho^B)^{1,1}, \qquad \omega(0)=\omega_0,
\end{equation}
where $(\rho^B)^{1,1}$ is the $(1,1)$-component of the Bismut Ricci form $\rho^B$ (see \eqref{rhos}).  

According to Theorem \ref{SKT2}, any left and $\Ad(T)$-invariant pluriclosed metric $g$ on a given compact semisimple Lie group $G$ endowed with a Samelson complex structure $J$ is of the form $g=g_\tg+g_\qg$, and up to scaling, on each simple factor of $G$ we have that $g_\tg=\gk|_{\tg}$, $g_\qg=(x_\alpha)_{\alpha\in\Delta^+}$ and if $\Pi=\{\alpha_1,\dots,\alpha_n\}$ and we set $x_i:=x_{\alpha_i}$, then 
\begin{equation}\label{xalfa}
x_\alpha=1+\sum_{i=1}^n k^\alpha_i(x_i-1), \qquad\forall 
\alpha=\sum_{i=1}^n k^\alpha_i\alpha_i.   
\end{equation}
It follows from \eqref{rhoB} that the pluriclosed flow evolution for $g(t)=g_\tg(t)+g_\qg(t)$, $g_\qg(t)=(x_\alpha(t))_{\alpha\in\Delta^+}$ is given on each simple factor of $G$ by $g_\tg(t)=\gk|_{\tg}$ for all $t$ and for each $\beta\in\Delta^+$,
\begin{align*}
-\im x_\beta' =& y_\beta'=\ddt\omega_\qg(E_\beta,E_{-\beta})=-\rho^B(E_\beta,E_{-\beta}) = \kil_\ggo(H_\beta, Z_{J_\qg}-Z_{J_\qg,g_\qg}) \\ 
&= -\im\kil_\ggo\left(\im H_\beta, \sum_{\alpha\in\Delta^+} \left(1-\tfrac{1}{x_\alpha}\right)\im H_\alpha\right) 
=\im\sum_{\alpha\in\Delta^+} \left(1-\tfrac{1}{x_\alpha}\right) \gk(\im H_\beta,\im H_\alpha),
\end{align*}
which is equivalent to 
\begin{equation}\label{PF-LG} 
x_\beta'=-\sum_{\alpha\in\Delta^+} \left(1-\tfrac{1}{x_\alpha}\right) \la\alpha,\beta\ra, \qquad\forall\beta\in\Delta^+.     
\end{equation}
By \eqref{xalfa}, the pluriclosed flow \eqref{PF} is therefore equivalent to the following ODE system: 
\begin{equation}\label{PF-LG2} 
x_j'
%=-\sum_{\alpha\in\Delta^+} \left(1-\tfrac{1}{x_\alpha}\right) \la\alpha,\alpha_j\ra 
=-\sum_{\alpha\in\Delta^+,1\leq i\leq n} \left(1-\tfrac{1}{x_\alpha}\right) k^\alpha_i \la\alpha_i,\alpha_j\ra, \qquad\forall j=1,\dots,n.     
\end{equation}
%and also to 
%\begin{equation}\label{PF-LG3} 
%x_j'=-\sum_{\substack{\alpha\in\Delta^+\\ 1\leq i\leq n}} \left(1-\tfrac{1}{\sum\limits_{l=1}^n k^\alpha_lx_l}\right) k^\alpha_i \la\alpha_i,\alpha_j\ra, \qquad\forall j=1,\dots,n.     
%\end{equation}
We consider the convex functional already used in \S\ref{CYT-sec} given by 
$$
F(X):=\sum_{\alpha\in\Delta^+} x_\alpha-\log{x_\alpha}, \qquad \forall X:=(x_1,\dots,x_n), 
$$
which has
$$ 
\tfrac{\partial}{\partial x_i}F=\sum_{\alpha\in\Delta^+} k^\alpha_i\left(1-\tfrac{1}{x_\alpha}\right), \qquad\forall i=1,\dots,n.  
$$
See Remark \ref{xpos} about the positivity region of the $x_\alpha$.  In the proof of Proposition \ref{pluri-CYT}, we showed that $F(X)>F(X_0)$ for all $X\ne X_0:=(1,\dots,1)$.  

It follows from \eqref{cyt1} and Proposition \ref{pluri-CYT} that $\grad(F)_X=0$ if and only if $X=X_0$, where $\grad(F)_X=(\tfrac{\partial}{\partial x_1}|_XF,\dots,\tfrac{\partial}{\partial x_n}|_XF)$ is the gradient of $F$ at the point $X$.  
Using \eqref{PF-LG2}, we obtain that the pluriclosed flow can be written as
\begin{equation}\label{PF-LG4} 
X'(t)=-Q\grad(F)_{X(t)}, \qquad \mbox{where}\quad Q:=[\la\alpha_i,\alpha_j\ra].    
\end{equation}
Note that $Q$ is a positive definite symmetric $n\times n$ matrix.  Thus
$$
\ddt F(X(t)) = \la X'(t),\grad(F)_{X(t)}\ra = -\la Q\grad(F)_{X(t)},\grad(F)_{X(t)}\ra \leq 0, 
$$
where equality holds if and only if $\grad(F)_{X(t)}=0$ as $Q$ is positive definite, if and only if $X(t)\equiv X_0$.  In other words, $F$ is a Liapunov function for the dynamical system and it follows from standard methods that $X_0$ is asymptotically stable, in the sense that any solution $X(t)$ starting near $X_0$ converges to $X_0$ as $t\to\infty$ (see e.g.\ \cite[Section 2.9]{Prk}).  On the other hand, if we change to the variable $Y:=Q^{-\unm}X$ and consider the new functional $H(Y):=F(Q^{\unm}Y)$, then it is easy to check that $X(t)$ is a solution to \eqref{PF-LG4} if and only if $Y(t)=Q^{-\unm}X(t)$ is the negative gradient flow of $H$:
\begin{equation*} 
Y'(t)=-\grad(H)_{Y(t)}.    
\end{equation*}
Since $Y_0:=Q^{-\unm}X_0$ is a global minimum and only critical point of $H$ and the function $H$ is real analytic, it is well known that indeed any solution $Y(t)$ converges to $Y_0$ as $t\to\infty$.   

We therefore obtain that any pluriclosed solution $g(t)$ converges to a bi-invariant metric.  More precisely, we have proved the following.  

\begin{theorem}\label{PF-thm}
The pluriclosed solution $g(t)$ on a compact semisimple Lie group $G=G_1\times\dots\times G_s$ starting at any left and $\Ad(T)$-invariant Hermitian structure $(J,g(0))$, say $g_\tg(0)(\tg_i,\tg_j)=0$ for all $i\ne j$, $g_\tg(0)|_{\tg_i}=z_i\gk|_{\tg_i}$ and $g_\qg(0)|_{\qg_i}=(z_ix_\alpha)_{\alpha\in\Delta^+_i}$ as in Theorem \ref{SKT2}, converges to the bi-invariant metric $g_b=z_1(-\kil_{\ggo_1})+\dots+z_s(-\kil_{\ggo_s})$ as $t\to\infty$.
\end{theorem}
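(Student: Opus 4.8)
The plan is to assemble the ingredients already in place---the reduction of the flow to the ODE system \eqref{PF-LG2}, its gradient form \eqref{PF-LG4}, and the Lyapunov argument for $F$---and then to track the scaling factors $z_i$ so as to pin down the precise limit metric. First I would record that everything decouples over the simple factors: simple roots attached to different $\ggo_i$ are $\kil_\ggo$-orthogonal, so $Q=[\la\alpha_i,\alpha_j\ra]$ is block diagonal, $Z_{J_\qg,g_\qg}$ splits along $\tg=\tg_1\oplus\dots\oplus\tg_s$, and the system \eqref{PF-LG2} restricts to an independent system on each factor. It therefore suffices to analyze the flow on a single simple $G_i$ and then take the product of the limits.

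The key structural step is to show that $g_\tg(t)$ is frozen by the flow, so that the $z_i$ do not move. Here I would use that $\rho^B=\kil_\ggo([\cdot,\cdot],W)$ with $W\in\tg$ (see \eqref{rhoB}); since $\tg$ is abelian and $\kil_\ggo(\tg,\qg)=0$, one gets $\rho^B(A,B)=0$ and $\rho^B(A,E_\alpha)=0$ for all $A,B\in\tg$, and the same for the $(1,1)$-part because $J_\tg$ preserves $\tg$. Hence $(\rho^B)^{1,1}$ has nonzero components only of the form $\rho^B(E_\alpha,E_{-\alpha})$, the flow fixes $\omega_\tg$, and $g_\tg(t)\equiv g_\tg(0)$; in particular each $z_i$ is constant in $t$. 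Since the pluriclosed flow preserves the SKT condition \cite{StrTian}, Theorem \ref{SKT2} keeps $g(t)$ inside the classified family, so on the $i$-th factor $g_\qg(t)|_{\qg_i}=(z_ix_\alpha(t))_{\alpha\in\Delta_i^+}$ with the $x_\alpha(t)$ determined by the simple-root values through \eqref{xalfa}.

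Next I would reinsert the scaling. Writing the genuine coefficient as $z_ix_\alpha$ and using $g_\tg|_{\tg_i}=z_i\gk|_{\tg_i}$ (so $P_\tg|_{\tg_i}=z_iI$), the factor $z_i$ cancels in $P_\tg Z_{J_\qg,g_\qg}$, so on $G_i$ the normalized vector $X_i(t)=(x_1(t),\dots,x_{n_i}(t))$ obeys $X_i'=-\tfrac{1}{z_i}Q_i\grad(F_i)_{X_i}$, that is, the analogue of \eqref{PF-LG4} with the positive definite $Q_i$ replaced by the still positive definite $\tfrac{1}{z_i}Q_i$ (here $F_i,Q_i$ are the factor-$i$ analogues of $F$ and $Q$). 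The Lyapunov computation carried out just before the statement then applies verbatim: $F_i$ strictly decreases along nonconstant solutions and $\grad(F_i)$ vanishes only at $(1,\dots,1)$, so $X_i(t)\to(1,\dots,1)$ as $t\to\infty$. Consequently $x_\alpha(t)\to1$ for every $\alpha\in\Delta_i^+$, whence $g_\qg(t)|_{\qg_i}\to z_i(-\kil_{\ggo_i})|_{\qg_i}$, while $g_\tg(t)|_{\tg_i}=z_i(-\kil_{\ggo_i})|_{\tg_i}$ is constant. Taking the product over $i$ yields $g(t)\to g_b=z_1(-\kil_{\ggo_1})+\dots+z_s(-\kil_{\ggo_s})$.

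The main obstacle I anticipate is purely the bookkeeping around the scalings: one must be sure both that the $z_i$ are genuinely frozen (which rests on the vanishing of the $\tg$-components of $(\rho^B)^{1,1}$) and that reinstating $z_i$ only reparametrizes time by a positive constant rather than disturbing the gradient structure of the flow. Once these two points are secured, no new analytic input is needed and the convergence follows from the Lyapunov argument already established.
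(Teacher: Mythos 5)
Your proof is correct and takes essentially the same route as the paper: the paper's own proof of Theorem \ref{PF-thm} consists precisely of the Section \ref{PF-sec} derivation you invoke (reduction to the ODE system \eqref{PF-LG2}, the gradient form \eqref{PF-LG4}, and the Lyapunov argument for $F$), after which the theorem is stated as already proved. Your extra bookkeeping --- the block-diagonal decoupling over simple factors, the freezing of $g_\tg$ from the vanishing of the toral components of $(\rho^B)^{1,1}$, and the cancellation of the $z_i$ in $P_\tg Z_{J_\qg,g_\qg}$ so that each factor's system is just $-\tfrac{1}{z_i}Q_i\grad(F_i)$, a constant positive rescaling that leaves the Lyapunov argument intact --- makes explicit what the paper handles implicitly through its ``up to scaling'' normalization, and is consistent with it.
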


Recall that in the case when $(G,J)$ is irreducible as a complex manifold, the limit $g_b$ is the unique bi-invariant metric (or Bismut flat) up to scaling compatible with $J$.  

\begin{example}
Using \eqref{PF-LG2} and \cite[Chapter X, \S 3]{Hlg} or \cite[Appendix C, \S 1,2]{Knp}, it is straightforward to see that the pluriclosed flow $(x(t),y(t))=(x_1(t),x_2(t))$ on $G=\SU(3)$ is given by 
$$
\left\{\begin{array}{l} 
x'= \frac{2}{x}-\frac{1}{y}+\frac{1}{x+y-1}-2, \\ y'=-\frac{1}{x}+\frac{2}{y}+\frac{1}{x+y-1}-2. 
\end{array}\right. 
$$
Analogously, for $G=\SO(5)$ we obtain that
$$
\left\{\begin{array}{l} 
x'= \frac{2}{x}-\frac{1}{y}+\frac{1}{x+y-1}-2, \\ y'=-\frac{1}{x}+\frac{1}{y}+\frac{1}{x+2y-2}-1, 
\end{array}\right. 
$$
and for $G=\Gg_2$, 
$$
\left\{\begin{array}{l} 
x'= \frac{2}{x}-\frac{3}{y}-\frac{1}{x+y-1}+\frac{1}{2x+y-2}+\frac{3}{3x+y-3}-2, \\ y'=-\frac{3}{x}+\frac{6}{y}+\frac{3}{x+y-1}-\frac{3}{3x+y-3}+\frac{3}{3x+2y-4}-6.   
\end{array}\right.
$$
Note that the lines $x=1$ and $y=1$ are both invariant in the three cases.  
\end{example}

\section{Appendix: Roots}\label{roots}

The following basic properties of root systems can be found in, among many other books, \cite[Chapter III, \S4,5, Chapter X, \S3]{Hlg} and \cite[Chapter II, \S5]{Knp}.   

Given a connected and compact semisimple Lie group $G$ and a maximal torus $T\subset G$, we have that the Lie algebra $\tg$ of $T$ is a maximal abelian subalgebra of $\ggo$, the Lie algebra of $G$, and the complexification $\tg^c$ is a Cartan subalgebra of $\ggo^c$.  The adjoint action of $\tg^c$ on the complex semisimple Lie algebra $\ggo^c$ determines the root space decomposition:
$$
\ggo^c=\tg^c\oplus\bigoplus_{\alpha\in\Delta}\ggo_\alpha, 
$$
where $\Delta\subset(\tg^c)^*$ is the {\it root system}, i.e., $[H,E]=\alpha(H)E$ for all $H\in\tg^c$ and $E\in\ggo_\alpha$.  Since the Killing form $\kil$ of $\ggo^c$ is non-degenerate on $\tg^c$, for any $\alpha\in\Delta$, there is a unique $H_\alpha\in\tg^c$ such that $\alpha=\kil(H_\alpha,\cdot)$.  There exist vectors $E_\alpha$, $\alpha\in\Delta$ such that  
$$
\ggo_\alpha=\CC E_\alpha, \qquad \kil(E_\alpha,E_{-\alpha})=1, \qquad H_\alpha=[E_\alpha,E_{-\alpha}], \qquad 
[E_\alpha,E_\beta]=N_{\alpha,\beta}E_{\alpha+\beta}. 
$$
If $\Delta^+$ is an {\it ordering} of $\Delta$ (i.e., $\Delta=\Delta^+\cup -\Delta^+$ (disjoint union) and $(\Delta^++\Delta^+)\cap\Delta\subset\Delta^+$) and $\Pi\subset\Delta^+$ is the subset of {\it simple} roots (i.e., those which are not the sum of two positive roots), then the set  
$$
\{H_\alpha:\alpha\in\Pi\} \cup \{E_\alpha:\alpha\in\Delta\}
$$ 
is a basis of $\ggo^c$ called the {\it Chevalley} basis and the set 
$$
\{\im H_\alpha:\alpha\in\Pi\} \cup \left\{e_\alpha:=\tfrac{1}{\sqrt{2}}(E_\alpha-E_{-\alpha}), \quad f_\alpha:=\tfrac{\im}{\sqrt{2}}(E_\alpha+E_{-\alpha})\right\} 
$$
is a basis of $\ggo$ called the {\it Cartan-Weyl} basis.

Every $\beta\in\Delta^+$ can be written in a unique way as $\beta=\sum\limits_{\alpha\in\Pi} n_\alpha\alpha$, where $n_\alpha\in\ZZ_{\geq 0}$ for any $\alpha\in\Pi$, and there exists a unique {\it maximal} root $\alpha_{max}$, i.e., for any $\alpha\in\Delta^+$, either $\alpha_{\max}-\alpha\in\Delta^+$ or $\alpha_{\max}-\alpha\notin\Delta$.  Since $H_\alpha\in\im\tg$ for any $\alpha\in\Delta$ and $\kil$ is positive definite on $\im\tg$, we can define 
$$
\la\alpha,\beta\ra:=\kil(H_\alpha,H_\beta) = -\kil_\ggo(\im H_\alpha,\im H_\beta), 
$$ 
where $\kil_\ggo$ is the Killing form of $\ggo$.  

The vectors $E_\alpha$ can be chosen in such a way that the numbers $N_{\alpha,\beta}$ such that $[E_\alpha,E_\beta]=N_{\alpha,\beta}E_{\alpha+\beta}$ satisfy that (see \cite[Chapter VI, Theorem 6.6]{Knp})
$$
N_{-\alpha,-\beta}=-N_{\alpha,\beta}.   
$$ 
These numbers also satisfy the following properties:
$$
N_{\beta,\alpha}=-N_{\alpha,\beta}, \qquad N_{\alpha,\beta}=N_{\beta,\gamma}=N_{\gamma,\alpha}, \quad\mbox{if}\quad \alpha+\beta+\gamma=0,
$$ 
and if $\alpha+\beta+\gamma+\delta=0$ and the sum of any pair is nonzero, then  
$$
N_{\alpha,\beta}N_{\gamma,\delta}-N_{\alpha,\gamma}N_{\beta,\delta}+ N_{\alpha,\delta}N_{\beta,\gamma} =0.
$$  
Moreover, an explicit formula for $N_{\alpha,\beta}^2$ is given as follows.  The only multiples of $\alpha\in\Delta$ which are roots are $0,\pm\alpha$, and given $\alpha,\beta\in\Delta$, we have that $\beta+n\alpha\in\Delta$ if and only if $p\leq n\leq q$, where $p\leq 0\leq q$ and 
$$
p+q=-\tfrac{2\la\alpha,\beta\ra}{\la\alpha,\alpha\ra}\in\ZZ, \qquad
N_{\alpha,\beta}^2=\unm q(1-p)\la\alpha,\alpha\ra.  
$$
In particular, $N_{\alpha,-\beta}^2=N_{\alpha,\beta}^2+\la\alpha,\beta\ra$, so $N_{\alpha,\beta}^2=-\la\alpha,\beta\ra$ if $\alpha+\beta\in\Delta$ and $\alpha-\beta\notin\Delta$ (e.g., whenever $\alpha,\beta\in\Pi$).


\begin{thebibliography}{MMM}

\bibitem[AD]{AlkDvd} {\sc D. Alekseevsky, L. David}, A note about invariant SKT structures and generalized K\"ahler structures on flag manifolds, {\it Proc. Edinburgh Math. Soc.} {\bf 55} (2012), 543-549.  

\bibitem[AP]{AlkPrl} {\sc D. Alekseevskii, A. Perelomov},  Invariant K\"ahler-Eisntein metrics on compact homogeneous spaces, {\it Funct. Anal. Appl. (3)} {\bf 20} (1986) 1-16.

\bibitem[AL]{ArrLfn} {\sc R. Arroyo, R. Lafuente}, The long-time behavior of the homogeneous pluriclosed flow, {\it Proc. Lond. Math. Soc.} {\bf 119} (2019), 266-289. 

\bibitem[AN]{ArrNcl} {\sc R. Arroyo, M. Nicolini}, SKT structures on nilmanifolds, {\it Math. Zeit.} {\bf 302} (2022), 1307-1320.

\bibitem[B]{Brb} {\sc G. Barbaro}, Bismut Hermitian Einstein metrics and the stability of the pluriclosed flow, preprint 2024 (arXiv).

\bibitem[F]{Fin} {\sc A. Fino}, Canonical metrics in complex geometry, {\it Boll. Uni. Mat. Italiana} (2024).  

\bibitem[FG1]{FinGrn1} {\sc A. Fino, G. Grantcharov}, CYT and SKT metrics on compact semisimple Lie groups, {\it SIGMA Sym. Integ. Geom. Methods Appl.} {\bf 19} (2023), 028.  
%Special issue in honour of Jean-Pierre Bourguignon on the occasion of his 75th birthday. 

\bibitem[FG2]{FinGrn2} {\sc A. Fino, G. Grantcharov}, A survey on pluriclosed and CYT metrics, {\it Serdica Math. J.} {\bf 50} (2024), 103-124.  

\bibitem[FGV]{FinGrnVzz} {\sc A. Fino, G. Grantcharov, L. Vezzoni}, Astheno-K\"ahler and balanced structures on fibrations, {\it Intern. Math. Res. Notices} {\bf 22} (2019), 7093-7117.  

\bibitem[FP]{FinPrd} {\sc A. Fino, F. Paradiso}, Hermitian structures on a class of almost nilpotent solvmanifolds, {\it J. Algebra} {\bf 609} (2022), 861-925.

\bibitem[FS]{FrbSwn} {\sc M. Freibert, A. Swann}, Two-step solvable SKT shears, {\it Math. Zeit.} {\bf 299} (2021), 1703-1739.

\bibitem[FV]{FusVzz} {\sc E. Fusi, L. Vezzoni}, On the pluriclosed flow on Oeljeklaus-Toma manifolds, {\it Canadian J. Math.} {\bf 76} (2024), 39-65.  

\bibitem[GJS]{GrcJrdStr} {\sc M. Garcia-Fernandez, J. Jordan, J. Streets}, Non-K\"ahler Calabi-Yau geometry and pluriclosed flow, {\it J. Math. Pures Appl.} {\bf 177} (2023), 329-367.  

\bibitem[G]{Gdc} {\sc P. Gauduchon}, Hermitian connections and Dirac operators, {\it Boll. Un. Mat.
Ital. B (7)} {\bf 11}, 2 (1997), 257-288.

\bibitem[GP]{GstPds} {\sc F. Giusti, F. Podest\`a}, Real semisimple Lie groups and balanced metrics, {\it Rev. Mat. Iberoam.} {\bf 39} (2023), 711-729.  

\bibitem[G]{Grn} {\sc G. Grantcharov}, Geometry of compact complex homogeneous spaces
with vanishing first Chern class, {\it Adv. Math.} {\bf 226} (2011) 3136-3159.  

\bibitem[H]{Hlg} {\sc S. Helgason}, Differential Geometry, Lie Groups, and Symmetric Spaces, {\it Pure and
Applied Mathematics} {\bf 80} (1978), Academic Press, New York.

\bibitem[IP]{IvnPpd} {\sc S. Ivanov, G. Papadopoulos}, Vanishing theorems and string backgrounds, {\it Class. Quantum Grav.} {\bf 18} (2001), 1089.  

\bibitem[K]{Knp} {\sc A. Knapp}, Lie groups beyond an introduction, 2nd Edition, {\it Prog. Math} {\bf 140} (2002), Birkh\"auser.

\bibitem[K]{Ksz} {\sc J. Koszul}, Sur la forme hermitienne canonique des espace homogenes complexes, {\it Can.
J. Math.} {\bf 7} (1955), 562-576. 

\bibitem[LR]{CRF} {\sc J. Lauret, E.A. Rodr\'\i guez Valencia}, On the Chern-Ricci flow and its solitons for Lie groups, {\it Math. Nachr.} {\bf 288} (2015), 1512-1526.

\bibitem[LW]{H3}  {\sc J. Lauret, C.E. Will}, Harmonic $3$-forms on compact homogeneous spaces, {\it J. Geom. Anal.} {\bf 33} (2023), 175.  

\bibitem[P]{Prk}  {\sc L. Perko}, Differential equations and dynamical systems, {\it Texts App. Math.} {\bf 7} (2001), Springer.

\bibitem[Ph]{Phm} {\sc D.N. Pham}, A family of left-invariant SKT metrics on the exceptional Lie group $G_2$, {\it Acta Math. Univ. Comenianae} {\bf 94} (2025), 21-46.  

\bibitem[Pi]{Ptt} {\sc H. Pittie}, The Dolbeault-cohomology ring of a compact, even-dimensional lie group, {\it Proc. Indian Acad. Sci. (Math. Sci.)} {\bf 98} (1988), 117-152.

\bibitem[P]{Pds} {\sc F. Podest\`a}, Homogeneous Hermitian manifolds and special metrics, {\it Transformation Groups} {\bf 23} (2018), 1129-1147.  

\bibitem[S]{Sml} {\sc Samelson},  A class of complex-analytic manifolds, {\it Portugal. Math.} {\bf 12} (1953), 129-132.  

\bibitem[ST]{StrTian} {\sc J. Streets and G. Tian}, Hermitian curvature flow, {\it J. European Math. Soc.} {\bf 13} (2011), 
601-634.

\bibitem[V]{Vzz} {\sc L. Vezzoni}, A note on canonical Ricci forms on 2-step nilmanifolds, {\it Proc. Amer. Math. Soc.} {\bf 141} (2013), 325-333.

\bibitem[W]{Wng} {\sc H-C. Wang},  Closed Manifolds with Homogeneous Complex Structure, {\it Amer. J. Math.} {\bf 76} (1954), 1-32.  

\bibitem[WYZ]{WngYngZhn} {\sc Q. Wang, B. Yang, F. Zheng}, On Bismut flat manifolds, {\it Trans. Amer. Math. Soc.} {\bf 373} (2020), 5747-5772.
\end{thebibliography}
\end{document}